\newlist{legal}{enumerate}{10}
\setlist[legal]{label*=\arabic*.}
\newtheorem{defin}{Definition}[section]
\newtheorem{thm}{\textbf{Theorem}}[section]
\newtheorem{prova}{\textbf{Proposition/Example}}[section]
\newtheorem{lemma}[defin]{\textbf{Lemma}}
\newtheorem{prop}[defin]{\textbf{Proposition}}
\theoremstyle{definition}
\newtheorem{rem}[defin]{\textbf{Remark}}
\newtheorem{rems}[defin]{\textbf{Remarks}}
\newtheorem{exa}[defin]{\textbf{Example}}
\begin{document}
\title[Surfaces with Prym-canonical hyperplane sections]
{Surfaces with Prym-canonical hyperplane sections}

\author{MARTINA ANELLI}



\thanks{}

\subjclass{}


\date{}

\dedicatory{}

\commby{}


\begin{abstract}
In this paper, we will explicit some general properties regarding surfaces with Prym-canonical hyperplane sections and the geometric genus of their possible singularities. Moreover, we will construct new examples of this type of surfaces.
\end{abstract}

\maketitle

\section[Introduction]{Introduction} Let $g\geq3$. It is well-known that a \emph{Prym curve} is a pair $(C,\alpha)$, where $C$ is a smooth genus $g=p_g(C)$ curve and $\alpha$ is a non-zero $2-$torsion point of $\operatorname{Pic}^0(C)$. In the following, we will consider the so called \emph{Prym-canonical map}, that is the rational map $$\phi_{|\omega_C(\alpha)|}:C\dashrightarrow \mathbb{P}^{g-2}$$ defined by $|\omega_C(\alpha)|$. In general, the pair $(C, \omega_C(\alpha))$ is called \emph{Prym-canonical curve}. The complete linear system $|\omega_C(\alpha)|$ is base point free unless $C$ is hyperelliptic and $\alpha\simeq \mathcal{O}_{C}(p-q)$, with $p$ and $q$ ramification points of the $g^1_2$. Moreover, it defines an embedding if and only if $C$ does not have a $g^1_4$ such that $\alpha\sim \mathcal{O}_{C}(a+b-x-y)$, where $2(a+b)$ and $2(x+y)$ are members of the $g^1_4$ (see \cite{CDGK}, Lemma $2.1$). If $\phi_{|\omega_C(\alpha)|}$ is an embedding, we say that $C \simeq \phi(C) \subset P^{g-2}$ is a \emph{Prym-canonical (embedded) curve}. If $g<5$, then the Prym-canonical map cannot be an embedding, as observed in \cite{CDGK}, so we will work only with $g\geq5$.

\par\bigskip\noindent We say that a \emph{surface $X$ has Prym-canonical hyperplane sections} if it can be birationally realized in some projective space $\mathbb{P}^{g-1}$, for $g\geq5$, such that a general hyperplane section $C$ of $X$ is a smooth Prym-canonical (embedded) curve of genus $g$.

\par\bigskip\noindent In this paper we will analyze the complex projective surfaces with Prym-canonical hyperplane sections up to birational equivalence. In particular, Section $2$ is devoted to studying the first properties regarding surfaces with Prym-canonical hyperplane sections. We will show that these surfaces can be birationally equivalent to ruled surfaces or to $\mathbb{P}^2$ or to Enriques surfaces. In any case, there is only one effective antibicanonical divisor $W'$ on $X'$, the minimal resolution of the singularities of $X$, and, if $\pi:X'\rightarrow X$, then the antibicanonical divisor of $X'$ is contracted by $\pi$ and every singularity $x\in X$ such that $\pi^{-1}(x)$ does not meet $\operatorname{supp}(W')$ is a rational double point. We will also show that surfaces with Prym-canonical hyperplane sections birationally equivalent to non-rational ruled surfaces over a base curve of genus $q>0$ have non rational singularities on them such that the sum of the geometric genus of their singularities is $q$.

\par\bigskip\noindent At the best of our knowledge, the only known examples of surfaces with Prym-canonical hyperplane sections are the Enriques surfaces and a surface in $\mathbb{P}^5$ of degree $10$ obtained as image of the blowing up of $\mathbb{P}^2$ in the $10$ nodes of an irreducible rational plane curve of degree $6$. In Section $3$ we will construct new examples of these surfaces. In particular, we will construct four new examples, one birationally equivalent to an elliptic ruled surface, another birationally equivalent to a ruled surface over a base curve of genus $q\geq3$, again another one birationally equivalent to a rational ruled surface and finally, we will construct a new example of surface with Prym-canonical hyperplane sections birationally equivalent to $\mathbb{P}^2$.

\par\bigskip \textit{Acknowledgements.} The results of this paper are contained in my PhD-thesis. I would like to express my deepest gratitude to my three advisors, Ciro Ciliberto, Concettina Galati and Andreas Leopold Knutsen, for their useful and indispensable advice and I would also like to acknowledge PhD-funding from the Department of Mathematics and Computer Science of the University of Calabria and funding from Research project "Families of curves: their moduli and their related varieties" (CUP E81|18000100005, P.I. Flaminio Flamini) in the framework of Mission Sustainability 2017 - Tor Vergata University of Rome.

\par\bigskip
 \section{Preliminary results}
We recall that a surface $X\subseteq \mathbb{P}^{g-1}$ is a surface with Prym canonical hyperplane sections if its general hyperplane section $C$ is a Prym canonical embedded curve. We start with the following remarks.

\begin{rems}
\end{rems}
\begin{itemize}
  \item The generic hyperplane section of $X$ is irreducible and smooth, whence $X$ has at most isolated singularities.
  \item Let $\pi:X'\rightarrow X$ be the minimal resolution of singularities of $X$ and let $C'=\pi^*C$ be the inverse image of a general hyperplane section. For a general $C' \in |\pi^*C|$, we have that $C' \cong C$ and $\mathcal{O}_{C'}(C') \cong \mathcal{O}_C(1) \cong \omega_{C'}(\alpha)$, with $\alpha$ a non trivial two-torsion element of $\operatorname{Pic}^0(C')$. By the adjunction formula and because $X'$ is smooth, we can say that $\alpha=-K_{X'}|_{C'}$, in particular $K_{X'}\cdot C'=0$.
 \end{itemize}

 \par\bigskip\noindent From now on, we will assume that $C$ is projectively normal with respect to its embedding in $\mathbb{P}^{g-2}$. For example, if $\phi_{|\omega_C(\alpha)|}:C\hookrightarrow \mathbb{P}^{g-2}$ is a Prym-canonical embedding and the Clifford index $\operatorname{Cliff}(C)$$\geq3$, then $C$ is projectively normal with respect to the given embedding (see Theorem $1$, \cite{GL}). In general, there are also projectively normal curves $C\subseteq\mathbb{P}^{g-2}$ with $\operatorname{Cliff}(C)<3$.

 \par\bigskip\noindent We state some general properties regarding surfaces with Prym-canonical hyperplane sections.
 \begin{thm} \label{first properties}
   Let $X$ be a surface with Prym-canonical hyperplane section $C$ of genus $g\geq 5$ and let $\pi:X'\rightarrow X$ be the minimal resolution of its singularities. If $C$ is projectively normal with respect to its embedding in $\mathbb{P}^{g-2}$, then:
  \begin{itemize}
    \item $h^1(\mathcal{O}_{X}(n))=0$ and $h^2(\mathcal{O}_{X}(n))=0$ for any $n\geq0$, in particular $h^1(\mathcal{O}_X)=0$ and $h^2(\mathcal{O}_X)=0$, whence $p_a(X)=0$;
    \item $X$ is projectively normal;
    \item the Kodaira dimension $\kappa(X')$ equals to $-\infty$ or $0$;
    \item $\deg(X)=2g-2$.
  \end{itemize}
\end{thm}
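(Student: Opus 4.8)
The plan is to reduce every cohomological assertion to the vanishing of $H^i(\mathcal{I}(n))$, where $\mathcal{I}:=\mathcal{I}_{X/\mathbb{P}^{g-1}}$, and to run an induction on $n$ using two exact sequences: the ideal sequence $0\to\mathcal{I}(n)\to\mathcal{O}_{\mathbb{P}^{g-1}}(n)\to\mathcal{O}_X(n)\to 0$, and, for a general hyperplane $H\cong\mathbb{P}^{g-2}$ with $X\cap H=C$ (scheme-theoretically, and with no $\operatorname{Tor}$ since $X$ is reduced of pure dimension $2$), the restriction sequence
\[
0\longrightarrow\mathcal{I}(n-1)\longrightarrow\mathcal{I}(n)\longrightarrow\mathcal{I}_{C/H}(n)\longrightarrow 0 .
\]
Since $g-1\geq 4$ and $g-2\geq 3$, the relevant twists of $\mathcal{O}_{\mathbb{P}^{g-1}}$ and $\mathcal{O}_{\mathbb{P}^{g-2}}$ have no $H^1$, no $H^2$, and no $H^3$ in degrees $\geq 0$; so $H^1(\mathcal{O}_X(n))\cong H^2(\mathcal{I}(n))$ and $H^2(\mathcal{O}_X(n))\cong H^3(\mathcal{I}(n))$ for $n\geq 0$, while $H^2(\mathcal{I}_{C/H}(n))\cong H^1(\mathcal{O}_C(n))$ and $H^3(\mathcal{I}_{C/H}(n))=0$ for $n\geq 0$. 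The curve inputs I would need are: $H^1(\mathcal{I}_{C/H}(n))=0$ for all $n$ --- for $n\geq 2$ this is projective normality of $C$, for $n\leq 0$ it is elementary, and for $n=1$ it follows because $C$ is non-degenerate and $h^0(\omega_C(\alpha))=g-1=h^0(\mathcal{O}_{\mathbb{P}^{g-2}}(1))$; and $H^1(\mathcal{O}_C(n))=0$ for $n\geq 1$, since $\omega_C(\alpha)^{\otimes n}$ has degree $n(2g-2)\geq 2g-1$ for $n\geq 2$ and $h^1(\omega_C(\alpha))=h^0(\mathcal{O}_C(-\alpha))=0$ as $\alpha$ is a non-trivial $2$-torsion class.

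Next I would run the bootstrap. Feeding $H^1(\mathcal{I}_{C/H}(n))=0$ into the long exact sequence of the restriction sequence shows $H^2(\mathcal{I}(n-1))\hookrightarrow H^2(\mathcal{I}(n))$ for every $n$, so $n\mapsto h^2(\mathcal{I}(n))$ is non-decreasing; since it vanishes for $n\gg 0$ by Serre vanishing, it vanishes identically, whence $h^1(\mathcal{O}_X(n))=0$ for $n\geq 0$. The same input gives $H^1(\mathcal{I}(n-1))\twoheadrightarrow H^1(\mathcal{I}(n))$, so $h^1(\mathcal{I}(n))$ is non-increasing; as $h^1(\mathcal{I}(0))=0$ ($X$ irreducible, hence connected), $H^1(\mathcal{I}(n))=0$ for $n\geq 0$, i.e. $H^0(\mathcal{O}_{\mathbb{P}^{g-1}}(n))\twoheadrightarrow H^0(\mathcal{O}_X(n))$ for all $n$. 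Since moreover $X$ is normal --- a general hyperplane section through any point is a reduced, hence Cohen--Macaulay, curve, which forces $\mathcal{O}_{X,p}$ Cohen--Macaulay and, with regularity in codimension $1$, makes $X$ normal --- this is exactly projective normality of $X$. For $H^2(\mathcal{O}_X(n))$: for $n\geq 1$ the restriction sequence together with $H^2(\mathcal{I}_{C/H}(n))=H^3(\mathcal{I}_{C/H}(n))=0$ gives $H^3(\mathcal{I}(n-1))\cong H^3(\mathcal{I}(n))$, constant in $n\geq 0$ and $0$ for $n\gg 0$, hence $H^2(\mathcal{O}_X(n))\cong H^3(\mathcal{I}(n))=0$ for $n\geq 0$. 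In particular $h^1(\mathcal{O}_X)=h^2(\mathcal{O}_X)=0$, so $p_a(X)=\chi(\mathcal{O}_X)-1=0$.

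The last two items are short. The degree of $X$ equals the self-intersection $C\cdot C=\deg\mathcal{O}_C(1)=\deg\omega_C(\alpha)=2g-2$. For the Kodaira dimension, recall from the Remarks that $C'=\pi^*C$ is nef and big on $X'$ --- it is the pullback of an ample class and $(C')^{2}=\deg X>0$ --- and that $K_{X'}\cdot C'=0$. If $\kappa(X')=2$, then $K_{X'}$ is big, hence numerically the sum of an ample $\mathbb{Q}$-divisor and an effective divisor, so $K_{X'}\cdot C'>0$, a contradiction. If $\kappa(X')=1$, then $|mK_{X'}|$ has positive dimension for some $m$ (plurigenera are birational invariants, so this does not require $X'$ minimal); its moving part $M$ is a non-zero effective divisor with $M\leq mK_{X'}$, so $0\leq M\cdot C'\leq mK_{X'}\cdot C'=0$, forcing $M\cdot C'=0$; but $M$ moves, so $M^{2}\geq 0$ and $M\not\equiv 0$, contradicting the Hodge index theorem, since $(C')^{2}>0$ makes the orthogonal complement of $C'$ negative definite. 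Hence $\kappa(X')\in\{-\infty,0\}$.

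The technical heart --- and the step I expect to be the main obstacle --- is pushing the cohomology vanishing down to \emph{all} degrees $n\geq 0$, not merely $n\gg 0$: this rests on the two monotonicities of $h^1(\mathcal{I}(n))$ and $h^2(\mathcal{I}(n))$ running in opposite directions and each being anchored at one end (connectedness for $H^1$, Serre vanishing for $H^2$), and on the low-degree cases $n=0,1$ of $H^1(\mathcal{I}_{C/H}(n))$, where the projective-normality hypothesis on $C$ does not directly apply and one must use non-degeneracy together with $h^0(\omega_C(\alpha))=g-1$. A subsidiary point is verifying that $X$ is actually normal, so that ``projectively normal'' is the right notion, and checking that the exclusion of $\kappa(X')=1$ is insensitive to passing to a non-minimal model.
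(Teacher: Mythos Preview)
Your argument is correct, but it diverges from the paper's in two places worth flagging. For the cohomological vanishing and projective normality you work with the ideal sheaf $\mathcal{I}=\mathcal{I}_{X/\mathbb{P}^{g-1}}$ and the restriction sequence $0\to\mathcal{I}(n-1)\to\mathcal{I}(n)\to\mathcal{I}_{C/H}(n)\to 0$, whereas the paper uses the structure-sheaf sequence $0\to\mathcal{O}_X(n-1)\to\mathcal{O}_X(n)\to\mathcal{O}_C(n)\to 0$ directly; these are equivalent up to the identifications $H^i(\mathcal{O}_X(n))\cong H^{i+1}(\mathcal{I}(n))$, so the content is the same. Your normality argument via Serre's criterion is different in spirit: the paper instead applies the already-obtained $h^1(\mathcal{O}_X)=0$ to the normalization sequence $0\to\mathcal{O}_X\to\eta_*\mathcal{O}_{\widetilde X}\to F\to 0$ to force $F=0$. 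That route avoids the somewhat delicate claim that a general hyperplane through a \emph{singular} point still cuts out a reduced curve; if you prefer your approach, it is cleaner to observe that the vanishings $H^1(\mathcal{I}(n))=H^2(\mathcal{I}(n))=0$ for all $n$ (which you in fact prove) make $X$ arithmetically Cohen--Macaulay, hence locally Cohen--Macaulay, and then $R_1$ finishes.

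The more substantive difference is in the Kodaira-dimension step. You rule out $\kappa(X')=1,2$ by intersection theory and Hodge index; this is perfectly valid and arguably more conceptual. The paper instead restricts $|mK_{X'}|$ to $C'$ and, using that $C'$ is nef with $(mK_{X'}-C')\cdot C'<0$, obtains the sharper bound $P_m(X')\le h^0(\mathcal{O}_{C'}(mK_{X'}))\le 1$ for every $m$. That extra information is not idle: it is invoked immediately afterwards (Remark~\ref{minima Enri}) to show that when $\kappa(X')=0$ the surface $X'$ is already minimal and hence an Enriques surface. Your argument yields $\kappa\in\{-\infty,0\}$ but not $P_m\le1$, so if you continue in this direction you will need to supply that bound separately.
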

\begin{proof}
  \begin{itemize}
    \item By assumption, $C$ is a projectively normal curve in $\mathbb{P}^{g-2}$, thus $$H^0(\mathcal{O}_{\mathbb{P}^{g-1}}(n))\twoheadrightarrow H^0(\mathcal{O}_C(n)),$$ for any $n\geq0$. As a consequence the map $H^0(\mathcal{O}_{X}(n))\rightarrow H^0(\mathcal{O}_C(n))$ is surjective for any $n\geq0$.

        \noindent Let us consider the exact sequence \begin{equation}\label{exact2}
                                                       0\rightarrow \mathcal{O}_X(n-1)\rightarrow \mathcal{O}_X(n)\rightarrow \mathcal{O}_C(n)\rightarrow0.
                                                     \end{equation}

                 \noindent Then, the second part of the long exact sequence associated with (\ref{exact2}) is
                 $$0\rightarrow H^1(\mathcal{O}_X(n-1))\rightarrow H^1(\mathcal{O}_X(n))\rightarrow H^1(\mathcal{O}_C(n))\rightarrow  $$
                 \begin{equation}\label{exact22}
                  \rightarrow H^2(\mathcal{O}_X(n-1))\rightarrow H^2(\mathcal{O}_X(n))\rightarrow 0.
                 \end{equation}

         \noindent By Serre's Theorem, there is a sufficiently large $n_0$ such that $h^1(\mathcal{O}_X(n))=0$, for any $n\geq n_0$. From the exact sequence (\ref{exact22}) and applying descending induction on $n$, we obtain that $h^1(\mathcal{O}_X(n))=0$, for any $n\geq0$.

    \par\noindent It is clear that $H^1(\mathcal{O}_C(n))=H^1(\mathcal{O}_C(n(K_C+\alpha)))$. For $n=1$, we have that $h^1(\mathcal{O}_C(K_C+\alpha))=h^0(\mathcal{O}_C(-\alpha))=0$. Moreover, because $\deg(K_C+\alpha)=2g-2$, then $\deg(n(K_C+\alpha))>2g-2$ for $n\geq2$, so $h^1(\mathcal{O}_C(n(K_C+\alpha)))=0$ (see \cite{H}, Example $IV.1.3.4$). Again by Serre's Theorem and applying descending induction on $n$, from the long exact sequence (\ref{exact22}) we can conclude that $h^2(\mathcal{O}_X(n))=0$, for any $n\geq0$.

    \item To prove that $X$ is projectively normal, it is enough to show that $X$ is normal and that the map $H^0(\mathcal{O}_{\mathbb{P}^{g-1}}(n))\rightarrow H^0(\mathcal{O}_X(n))$ is surjective for any $n\geq0$.

       \noindent Let $\eta:\widetilde{X}\rightarrow X$ be the normalization of $X$. We consider the following exact sequence on $X$: \begin{equation}\label{exact1}
                          0\rightarrow \mathcal{O}_X\rightarrow \eta_{*}\mathcal{O}_{\widetilde{X}}\rightarrow F\rightarrow 0,
                        \end{equation} where $\operatorname{supp}(F)\subset \operatorname{Sing}(X)$. Since $X$ has isolated singularities, then \linebreak
                         $F\cong H^0(F)=\oplus_{i=1}^{s}(\widetilde{\mathcal{O}_i}/\mathcal{O}_i)$, where $\mathcal{O}_i$ is the local ring of $x_i$ on $X$ and $(\eta_{*}\mathcal{O}_{\widetilde{X}})_{x_i}=\widetilde{\mathcal{O}_i}$ is the normalization of $\mathcal{O}_i$ in the function field of $X$, with $\operatorname{Sing}(X)=\{x_1,...,x_s\}$.

        \noindent We know that $H^0(\mathcal{O}_X)=k$ because $X$ is irreducible. By the properties of pushforward and because $\widetilde{X}$ is still irreducible, it is obvious that $H^0(\eta_{*}\mathcal{O}_{\widetilde{X}})\cong H^0(\mathcal{O}_{\widetilde{X}})\cong k$. Moreover $h^1(\mathcal{O}_X)=0$ by the previous part of this Proposition. For the long exact sequence associated with (\ref{exact1}), we have that $h^0(F)=0$. By definition of $F$, it is true that $\widetilde{\mathcal{O}_i}\cong \mathcal{O}_i$, for any $i=1,...,s$. We conclude that $X$ is normal.

       \noindent The surjectivity of $H^0(\mathcal{O}_{\mathbb{P}^{g-1}}(n))\rightarrow H^0(\mathcal{O}_X(n))$ is trivial for $n=0$. Let us consider the following diagram, where $H$ is a general hyperplane in $\mathbb{P}^{g-1}$ and $C=X\cap H$:

    \bigskip \begin{flushleft}
             \begin{tikzcd}
        0 \arrow[r]
					& H^0(\mathcal{O}_{\mathbb{P}^{g-1}}(n-1)) \arrow[r] \arrow[d, "r_1"]
						& H^0(\mathcal{O}_{\mathbb{P}^{g-1}}(n)) \arrow[r]\arrow[d,"r_2"]
							& H^0(\mathcal{O}_H(n)) \arrow[r]\arrow[d,"r_3"]
							& 0 \\
				0 \arrow[r]
					& H^0(\mathcal{O}_X(n-1)) \arrow[r]
						& H^0(\mathcal{O}_X(n)) \arrow[r]
			& H^0(\mathcal{O}_C(n)) \arrow[r]
					& 0 \\
       \end{tikzcd}
             \end{flushleft}

   \par\noindent We observe that $r_3$ is surjective because $C$ is projectively normal and $r_1$ is surjective by the inductive hypothesis. Then $r_2$ is also surjective and the claim is proved.
    \item Consider the following exact sequence:
    $$0\rightarrow \mathcal{O}_{X'}(-C'+mK_{X'})\rightarrow \mathcal{O}_{X'}(mK_{X'})\rightarrow \mathcal{O}_{C'}(m K_{X'}))\rightarrow0.$$
     Since $-K_{X'}|_{C'}\sim \alpha$, for $\alpha$ a non-zero two torsion element of $C'$, then we have that $(-C'+mK_{X'})\cdot C'=-C'^2=2-2g<0$. Whence \linebreak$h^0(\mathcal{O}_{X'}(-C'+mK_{X'}))=0$ otherwise, if this divisor was effective, it would be a fixed component of $|C'|$ that is a linear system without base locus by definition. At the same time

     $$ h^0(\mathcal{O}_{C'}(m(K_{X'})))=
\bigg\{
\begin{array}{lll}
 0& if \hspace{0.3cm}m\hspace{0.3cm} odd \\
1 & if \hspace{0.3cm}m\hspace{0.3cm} even. \\
\end{array}
$$

     \bigskip\noindent Consequently the plurigenus $P_m(X'):=h^0(\mathcal{O}_{X'}(mK_{X'}))\leq h^0(\mathcal{O}_{C'}(m K_{X'}))\linebreak\leq1$. Then the Kodaira dimension $\kappa(X')=-\infty$ or $0$.

    \item We have $\deg(X)=\deg(C)=C^2=\deg(K_C+\alpha)=2g-2$.
  \end{itemize}
\end{proof}

\begin{rems}\label{minima Enri}
\end{rems}
\begin{enumerate}
  \item Since the Kodaira dimension is a birational invariant for smooth varieties, if $\kappa(X')=-\infty$, then the minimal model $X''$ of $X'$ is a ruled surface or $\mathbb{P}^2$ (see \cite{H}, Theorem $V.6.1$).
  \item If $\kappa(X')=0$, then $X'$ is a minimal Enriques surface. Let us show this.

  \noindent Let us suppose that $X'$ is not minimal. Then there is a $(-1)-$curve $E'$ on $X'$. Because $\kappa(X')=0$, let $m>0$ be such that $|mK_{X'}|$ contains only one effective divisor $D'$. It is obvious that $E'$ is a component of $D'$. Now $\mathcal{O}_{C'}(D')\cong \mathcal{O}_{C'}(mK_{X'})\cong \mathcal{O}_{C'}$ because $-K_{X'}|_{C'}$ is a non-zero two torsion element and $m$ is even as seen in the previous Proposition. Therefore $D'$ and consequently $E'$ are contracted to a point on $X$ by $\pi$, contradicting the minimality of the resolution $\pi$.

\noindent
    It is true that $12K_{X'}\sim0$ because $X'$ is minimal and $\kappa(X')=0$ (see \cite{H}, Theorem $V.6.3$). By the classification of minimal surfaces, there is a smallest $m\geq1$ such that $mK_{X'}\sim0$ and the possibilities are $m=\{1,2,3,4,6\}$ (see \cite{E} and \cite{CE}). If $m=1$, then $K_{X'}\sim0$, whence $\mathcal{O}_{C'}(C')\cong \mathcal{O}_{C'}(K_{C'}-K_{X'})\cong \mathcal{O}_{C'}(K_{C'})$. This is not possible because $C'$ is a Prym-canonical curve. So we exclude the cases in which $X'$ is a $K3$ surface or an abelian surface.

    \noindent If $X'$ was a hyperelliptic surface, it would not contain curves with negative self-intersection, then we would have $X=X'$ smooth by Mumford's Theorem (see \cite{MUM}, Chapter $1$). By definition, a hyperelliptic surface is irregular, contradicting the first point of Proposition \ref{first properties}. In conclusion $X'$ is a minimal Enriques surface.

\end{enumerate}

\par\bigskip We want to determine the possible singularities on $X$ surface with Prym-canonical hyperplane sections. The following Proposition determines the geometric genus of the singularities that occur on $X$.

\begin{prop}\label{sum sing} With the same assumptions as before, if $\operatorname{Sing}(X)=\{x_1,...,x_s\}$ is the locus of the singular points of $X$, then:\begin{itemize}
    \item if $X$ is birationally equivalent to an Enriques surface or $\mathbb{P}^2$, then $X$ can only contain rational points as singularities;
    \item if $X$ is birationally equivalent to a ruled surface $X''$ over a base curve of genus $q\geq0$, then $\sum_{i=1}^{s}p_g(x_i)=q$, where $p_g(x_i)$ is the geometric genus of the singular point $x_i$.
  \end{itemize}
\end{prop}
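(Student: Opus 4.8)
The plan is to compute the relevant cohomological invariants by comparing $X$, its minimal resolution $X'$, and the minimal model $X''$, exploiting that $-K_{X'}|_{C'} = \alpha$ is a nonzero $2$-torsion class and that (by Theorem \ref{first properties}) $p_a(X) = 0$ while $\kappa(X') \in \{-\infty, 0\}$. The key geometric point relating the singularities to the global geometry is the exact sequence $0 \to \mathcal{O}_X \to \eta_* \mathcal{O}_{\widetilde X} \to F \to 0$ used in the proof of Theorem \ref{first properties}, combined with the Leray spectral sequence for $\pi : X' \to X$, which gives $0 \to \mathcal{O}_X \to \pi_*\mathcal{O}_{X'} \to R^1$ and identifies $h^1(\mathcal{O}_{X'})$ in terms of $h^1(\mathcal{O}_X) = 0$ and the lengths $h^0(R^1\pi_*\mathcal{O}_{X'}) = \sum_i p_g(x_i)$, the geometric genera of the singularities (this is the standard characterization of $p_g$ of a surface singularity: $p_g(x_i) = \operatorname{length}(R^1\pi_*\mathcal{O}_{X'})_{x_i}$). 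Since $X'$ is smooth with $h^1(\mathcal{O}_X) = h^2(\mathcal{O}_X) = 0$, the Leray sequence yields $h^1(\mathcal{O}_{X'}) = \sum_{i=1}^s p_g(x_i)$ and also controls $h^2(\mathcal{O}_{X'})$.

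First I would treat the Enriques and $\mathbb{P}^2$ cases. If $X$ is birational to $\mathbb{P}^2$ or (by Remark \ref{minima Enri}) $X'$ is a minimal Enriques surface, then in either situation $h^1(\mathcal{O}_{X'}) = 0$: for rational $X'$ this is classical, and for an Enriques surface $h^1(\mathcal{O}_{X'}) = 0$ by definition. Then the displayed Leray identity forces $\sum_i p_g(x_i) = 0$, i.e. every $p_g(x_i) = 0$, which is precisely the condition that each singularity $x_i$ be rational. (This is consistent with, and refines, the last assertion in the introduction's summary of Theorem's consequences that singularities whose exceptional locus misses $\operatorname{supp}(W')$ are rational double points.)

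Next I would handle the ruled case. Here $X'$ is birational to a ruled surface $X''$ over a smooth curve $B$ of genus $q \geq 0$. Since $h^1(\mathcal{O}_{X'})$ is a birational invariant of smooth projective surfaces, $h^1(\mathcal{O}_{X'}) = h^1(\mathcal{O}_{X''}) = q$. Combining with the Leray identity $h^1(\mathcal{O}_{X'}) = \sum_{i=1}^s p_g(x_i)$ established above gives $\sum_{i=1}^s p_g(x_i) = q$, as claimed. One has to double-check that the Leray computation is not spoiled by an $h^2$ contribution: from $0 \to \mathcal{O}_X \to \pi_*\mathcal{O}_{X'} \to R^1\pi_*\mathcal{O}_{X'} \to 0$ (the map $\mathcal{O}_X \to \pi_*\mathcal{O}_{X'}$ is an isomorphism since $X$ is normal) and the low-degree Leray exact sequence $0 \to H^1(\mathcal{O}_X) \to H^1(\mathcal{O}_{X'}) \to H^0(R^1\pi_*\mathcal{O}_{X'}) \to H^2(\mathcal{O}_X) \to H^2(\mathcal{O}_{X'})$, using $H^1(\mathcal{O}_X) = H^2(\mathcal{O}_X) = 0$ from Theorem \ref{first properties}, one gets the clean isomorphism $H^1(\mathcal{O}_{X'}) \cong H^0(R^1\pi_*\mathcal{O}_{X'})$, and the right-hand side has length $\sum_i p_g(x_i)$ since $R^1\pi_*\mathcal{O}_{X'}$ is supported on $\operatorname{Sing}(X)$ with stalk length $p_g(x_i)$ at $x_i$ by the definition of geometric genus of a normal surface singularity.

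The main obstacle is not any single deep theorem but rather assembling the correct bookkeeping: one must be careful that $\pi$ is the \emph{minimal} resolution so that no superfluous rational curves inflate nothing extra (the identity $h^1(\mathcal{O}_{X'}) = \sum p_g(x_i)$ actually holds for any resolution, minimal or not, since $p_g$ is resolution-independent, but minimality is what was used to get the structural results of Theorem \ref{first properties} that feed in $h^1(\mathcal{O}_X) = h^2(\mathcal{O}_X) = 0$), and one must invoke that $q = h^1(\mathcal{O}_{X''})$ is a birational invariant across the (possibly many) blow-ups and blow-downs connecting $X'$ and $X''$. Once the identification $h^1(\mathcal{O}_{X'}) = \sum_i p_g(x_i)$ is in hand, both bullet points follow immediately by specializing $h^1(\mathcal{O}_{X'})$ to $0$ (rational or Enriques) or to $q$ (ruled over a curve of genus $q$).
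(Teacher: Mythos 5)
Your proposal is correct and follows essentially the same route as the paper: the low-degree Leray exact sequence for $\pi:X'\rightarrow X$ together with $h^1(\mathcal{O}_X)=h^2(\mathcal{O}_X)=0$ from Theorem \ref{first properties} yields $h^1(\mathcal{O}_{X'})=h^0(R^1\pi_*\mathcal{O}_{X'})=\sum_i p_g(x_i)$, and then one specializes $h^1(\mathcal{O}_{X'})$ to $0$ in the Enriques and rational cases and to $q$ in the ruled case via birational invariance of the irregularity. Your added remarks on normality of $X$ (so that $\pi_*\mathcal{O}_{X'}\cong\mathcal{O}_X$) and on the resolution-independence of $p_g$ are correct refinements that the paper leaves implicit.
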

\begin{proof} These results can be obtained using the following exact sequence, which one gets from the Leray spectral sequence for the sheaf $\mathcal{O}_{X'}$ and the morphism $\pi$ (see \cite{GH}, pag. 462): $$0\rightarrow H^1(\mathcal{O}_X)\rightarrow H^1(\mathcal{O}_{X'})\rightarrow H^0(R^1\pi_*\mathcal{O}_{X'})\rightarrow H^2(\mathcal{O}_X)\rightarrow ...$$ We know that $h^1(\mathcal{O}_{X})=h^2(\mathcal{O}_X)=0$ by Theorem \ref{first properties}, so $$\sum_{i=1}^{s}p_g(x_i)=h^0(R^1\pi_*\mathcal{O}_{X'})=h^1(\mathcal{O}_{X'}),$$ where $p_g(x_i)$ is the geometric genus of the singular point $x_i$.

\begin{itemize}
  \item If $X$ is birationally equivalent to an Enriques surface, then $X'$ is a minimal Enriques surface by Remark \ref{minima Enri}. So $h^1(\mathcal{O}_{X'})=0$ by definition and $\sum_{i=1}^{s}p_g(x_i)=0$, whence $X$ can only contain rational singularities.

  \noindent If $X$ is birationally equivalent to $\mathbb{P}^2$, then it is clear that $h^1(\mathcal{O}_{X'})=0$ and $\sum_{i=1}^{s}p_g(x_i)=0$. This proves the first part of this Proposition.
  \item If $X$ is birationally equivalent to a ruled surface $X''$ over a base curve of genus $q\geq0$, then $h^1(\mathcal{O}_{X''})=q(X'')=q$. Consequently $q(X'')=q(X')=q$. So $\sum_{i=1}^{s}p_g(x_i)=h^1(\mathcal{O}_{X'})=q$.
\end{itemize}\end{proof}

\par\bigskip\noindent The following results give other information regarding the singularities that occur on $X$. First of all, we sketch the proof of a preliminary lemma.

\begin{lemma}
  Let $X$ be a surface with Prym-canonical hyperplane section $C$ and let $\pi:X'\rightarrow X$ be the minimal resolution of singularities of $X$. Then $\pi_*(2K_{X'})\sim 0$.
\end{lemma}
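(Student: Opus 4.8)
The plan is to prove the equivalent statement that $|-2K_{X'}|$ contains an effective divisor $W'$ which is contracted by $\pi$. Indeed, granting this we have $2K_{X'}\sim -W'$, and since $\pi$ is a proper birational morphism onto the normal surface $X$ it carries principal divisors to principal divisors, so $\pi_*(2K_{X'})\sim\pi_*(-W')=0$.

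First I would record the facts already at hand. For a general hyperplane section $C$, the curve $C'=\pi^*C$ is smooth, isomorphic to $C$, and disjoint from the exceptional locus of $\pi$; $\mathcal{O}_{X'}(C')$ is nef and big (it is the pullback of the very ample $\mathcal{O}_X(C)$, and $(C')^2=2g-2>0$); $K_{X'}\cdot C'=0$; and, as $\mathcal{O}_{C'}(-K_{X'})\cong\alpha$ with $2\alpha\sim 0$, we get $\mathcal{O}_{C'}(-2K_{X'})\cong\mathcal{O}_{C'}$. By Theorem \ref{first properties} we also have $\kappa(X')\in\{-\infty,0\}$. If $\kappa(X')=0$, then $X'$ is a minimal Enriques surface by Remark \ref{minima Enri}, hence $2K_{X'}\sim 0$ and $W'=0$ works; so I may assume $\kappa(X')=-\infty$, in which case $h^0(\mathcal{O}_{X'}(3K_{X'}))=P_3(X')=0$.

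Next I would use the exact sequence
\[
0\to\mathcal{O}_{X'}(-2K_{X'}-C')\to\mathcal{O}_{X'}(-2K_{X'})\to\mathcal{O}_{C'}(-2K_{X'})\cong\mathcal{O}_{C'}\to 0 .
\]
As in the proof of Theorem \ref{first properties}, $-2K_{X'}-C'$ cannot be a fixed component of the base-point-free system $|C'|$, since $(-2K_{X'}-C')\cdot C'=-(2g-2)<0$; hence $h^0(\mathcal{O}_{X'}(-2K_{X'}-C'))=0$, so $h^0(\mathcal{O}_{X'}(-2K_{X'}))\le 1$, with equality exactly when the connecting map $\delta\colon H^0(\mathcal{O}_{C'})\to H^1(\mathcal{O}_{X'}(-2K_{X'}-C'))$ vanishes. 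By Serre duality on $X'$ and on $C'$, and since $(3K_{X'}+C')|_{C'}\sim K_{C'}-2\alpha\sim K_{C'}$, this is equivalent to the vanishing of the restriction map $H^1(X',\mathcal{O}_{X'}(3K_{X'}+C'))\to H^1(C',\omega_{C'})$.

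The vanishing of this map is where the real work lies, and I expect it to be the main obstacle. To prove it I would pass to the minimal model $\sigma\colon X'\to X''$ --- a ruled surface over a smooth curve of genus $q$, or $\mathbb{P}^2$ --- and exploit the numerical constraints $K_{X'}\cdot C'=0$, $(C')^2=2g-2$ and $q(X')=q$ (Proposition \ref{sum sing}) together with the explicit shape of $K_{X''}$: applying Riemann--Roch on $X'$ and the projection formula along $\sigma$ to $\mathcal{O}_{X'}(3K_{X'}+C')$ (and, if needed, to $\mathcal{O}_{X'}(3K_{X'}+2C')$) should pin down $h^0$ and $h^1$ tightly enough to force the restriction map above to vanish, i.e. to produce the unique section of $-2K_{X'}$. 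Finally, given $W'\in|-2K_{X'}|$, the relation $W'\cdot C'=-2K_{X'}\cdot C'=0$ forces $W'$ to be $\pi$-exceptional: writing $W'=\sum_i a_i\Gamma_i$ with $a_i>0$, the projection formula gives $\sum_i a_i\,(\pi_*\Gamma_i)\cdot C=0$ with every term $\ge 0$ since $\mathcal{O}_X(C)$ is ample, whence each $\pi_*\Gamma_i=0$. The reduction in the first paragraph then gives $\pi_*(2K_{X'})\sim 0$.
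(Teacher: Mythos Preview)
Your reduction in the first paragraph is fine, and so are the cases $\kappa(X')=0$ and the bounds $h^0(\mathcal{O}_{X'}(-2K_{X'}-C'))=0$, $h^0(\mathcal{O}_{X'}(-2K_{X'}))\le 1$. The gap is the step you flag yourself: producing the section, i.e.\ showing $h^0(\mathcal{O}_{X'}(-2K_{X'}))=1$. Your plan ``Riemann--Roch plus projection to the minimal model should pin down $h^0$ and $h^1$'' does not work as stated. By Hodge index, $K_{X'}\cdot C'=0$ and $C'^2>0$ force $K_{X'}^2\le 0$, so $\chi(\mathcal{O}_{X'}(-2K_{X'}))=(1-q)+3K_{X'}^2\le 1-q$, which is $\le 0$ as soon as $q\ge 1$; hence Riemann--Roch alone yields no lower bound on $h^0$. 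Pushing to the minimal model does not help uniformly either: for instance in the paper's own Example 3.3 (elliptic base) one has $h^0(\mathcal{O}_{X''}(-2K_{X''}))=10$ on the minimal model but only after imposing the nine base conditions does $h^0(\mathcal{O}_{X'}(-2K_{X'}))$ drop to something $\ge 1$, and verifying this requires the specific geometry of the example, not a general argument. In short, you are trying to prove the Lemma by first establishing what the paper proves \emph{afterwards} (Theorem~\ref{unique abc}) \emph{using} the Lemma; you have inverted the logical order and left the genuinely hard direction as a hope.

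The paper's argument avoids this entirely. It never produces an effective $W'$. Instead it works on $X$: for every $m\ge 1$ one takes a smooth $C_m\in|mC|$ missing $\operatorname{Sing}(X)$, notes that $-2K_{X'}|_{C'_m}\sim 0$ (immediate from the $2$-torsion property), and uses the projection formula to get $\pi_*(2K_{X'})|_{C_m}\sim 0$ for all $m$. The key non-trivial input is then a classical theorem of Zariski on normal varieties: a Weil divisor whose restriction to every member of $|mC|$ is linearly trivial for all $m$ is itself linearly trivial. This sidesteps any cohomological computation on $X'$ and in particular does not require knowing that $|-2K_{X'}|$ is non-empty. If you want your strategy to go through, you would need an independent argument for $h^0(\mathcal{O}_{X'}(-2K_{X'}))\ge 1$ in the ruled case; absent that, the route via Zariski's theorem is both shorter and complete.
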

\begin{proof}
  Let $C_m\in |mC|$ be smooth satisfing $C_m\cap\operatorname{Sing}(X)=\varnothing$. We put \linebreak$C'_m=\pi^{*}(C_m)$. We have that $-2K_{X'}\cdot C'_m\sim-2mK_{X'}\cdot C'\sim0$ in the Chow Ring $A(X')$. Since rational equivalence and linear equivalence coincide on a curve, then $-2K_{X'}|_{C'_m}\sim 0$, for any $m\geq1$.

  \par\bigskip\noindent
  We also observe that $\pi_*(2K_{X'})|_{C_m}\sim0$. Indeed, since $\pi$ is an isomorphism in a neighbourhood of $C'_m$, then $\pi_*\mathcal{O}_{C'_m}\cong \mathcal{O}_{C_m}$ and $\mathcal{O}_{C'_m}\cong \pi^*(\mathcal{O}_{C_m})$. Using the projection formula (see \cite{H}, Exe $II.5.1$) and the previous results, we obtain that \linebreak $\mathcal{O}_{C_m}\cong \pi_*(\mathcal{O}_{C'_m})\cong \pi_*(2K_{X'}\otimes \mathcal{O}_{C'_m})=\pi_*(2K_{X'}\otimes \pi^*\mathcal{O}_{C_m})\cong \pi_*(2K_{X'})\otimes \mathcal{O}_{C_m}$.

  \bigskip\noindent By a known result of Zariski (\cite{Z}, Theorem $4$), if $\pi_*(2K_{X'})|_{C_m}\sim0$ for any $m$, then there exists a divisor $D$ such that $D\sim \pi_*(2K_{X'})$ and $D|_{C_m}\sim 0$. Since $C_m$ is very ample on $X$, then $D\sim0$. So $\pi_*(2K_{X'})\sim0$. This proves the lemma.

\end{proof}

\par\bigskip
\begin{thm}\label{unique abc}
  The dimension $\dim|-2K_{X'}|=0$, in particular, if $W'$ is the effective antibicanonical divisor on $X'$, then either $W'\sim 0$ or $\operatorname{supp}(W')=\pi^{-1}({x_1,...,x_r})$ for certain singularities $x_i\in X$, for $i=1,..,r$.
     \end{thm}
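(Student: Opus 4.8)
The plan is to obtain everything from the preceding Lemma, which gives $\pi_*(2K_{X'})\sim 0$, together with two standard facts about the minimal resolution $\pi$: the irreducible exceptional curves $E_1,\dots,E_N$ (the components of $\pi^{-1}(\operatorname{Sing}(X))$) have negative definite intersection matrix, and none of them is a $(-1)$-curve, so adjunction $K_{X'}\cdot E_i = 2p_a(E_i)-2-E_i^2$ forces $K_{X'}\cdot E_i\ge 0$ for every $i$. First I would show $-2K_{X'}$ is effective. Since $\pi$ is birational (an isomorphism over $X\setminus\operatorname{Sing}(X)$, with isolated singularities), one has $\pi_*\operatorname{div}_{X'}(f)=\operatorname{div}_X(f)$; writing $\pi_*(2K_{X'})=\operatorname{div}_X(f)$ one gets $2K_{X'}\sim E':=2K_{X'}-\operatorname{div}_{X'}(f)$ with $\pi_*E'=0$, i.e. $E'=\sum_i b_iE_i$ is supported on the exceptional locus, $b_i\in\Z$. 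Then $E'\cdot E_i = 2K_{X'}\cdot E_i\ge 0$ for all $i$; decomposing $E'=P-N$ with $P,N\ge 0$ having no common component, $P^2 = P\cdot E' + P\cdot N\ge 0$, so negative definiteness forces $P=0$. Hence $E'\le 0$ and $W':=-E'\sim -2K_{X'}$ is effective and supported on $\operatorname{Exc}(\pi)$; in particular $|-2K_{X'}|\ne\varnothing$ and $W'$ is contracted by $\pi$.

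Next I would prove uniqueness. Let $D\in|-2K_{X'}|$ be any effective divisor. Using $K_{X'}\cdot C'=0$ from the Remarks, $D\cdot C'=-2K_{X'}\cdot C'=0$. Since $C'=\pi^{*}C$ with $C$ very ample on $X$, the class $C'$ is nef, and for any non-exceptional component $\Gamma$ of $D$ the projection formula gives $\Gamma\cdot C'=\pi_*\Gamma\cdot C>0$ (as $C$ is ample and $\pi_*\Gamma$ is a curve); hence every component of $D$ is exceptional. Thus $D$ and $W'$ are both supported on $\operatorname{Exc}(\pi)$ and $D-W'\sim 0$; writing $D-W'=\operatorname{div}_{X'}(g)$ and pushing down yields $\operatorname{div}_X(g)=\pi_*(D-W')=0$, so $g$ is a nonzero constant and $D=W'$. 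Therefore $|-2K_{X'}|=\{W'\}$ and $\dim|-2K_{X'}|=0$.

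Finally I would pin down $\operatorname{supp}(W')$. Write $W'=\sum_i c_iE_i$ with $c_i\ge 0$; then $W'\cdot E_j=-2K_{X'}\cdot E_j\le 0$ for every $j$. If some fibre $\pi^{-1}(x)$ met $\operatorname{supp}(W')$ without being contained in it, then by connectedness of $\pi^{-1}(x)$ there would be a component $E_j\subset\pi^{-1}(x)$ with $c_j=0$ meeting a component $E_k\subset\pi^{-1}(x)$ with $c_k>0$, and then $W'\cdot E_j=\sum_i c_i(E_i\cdot E_j)\ge c_k(E_k\cdot E_j)>0$ (the $i=j$ term vanishes, all others are $\ge 0$), contradicting $W'\cdot E_j\le 0$. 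Hence each fibre of $\pi$ is either disjoint from $\operatorname{supp}(W')$ or entirely contained in it; letting $x_1,\dots,x_r$ be the singular points whose fibre meets $\operatorname{supp}(W')$ gives $\operatorname{supp}(W')=\pi^{-1}(\{x_1,\dots,x_r\})$, and the remaining case $r=0$ is precisely $W'=0$, i.e. $W'\sim 0$.

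The hard part is the effectivity of $-2K_{X'}$ in the first paragraph: for a general rational or ruled surface $-2K$ need not be effective, so the existence of $W'$ is not formal. It comes out only from the interplay of $\pi_*(2K_{X'})\sim 0$ (the Lemma) with the minimality of $\pi$ (giving $K_{X'}\cdot E_i\ge 0$) and the negative definiteness of the exceptional configuration; once $W'$ is known to exist, uniqueness and the description of its support are routine manipulations with exceptional divisors.
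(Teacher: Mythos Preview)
Your argument is correct and follows essentially the same route as the paper: both use the preceding Lemma to get a representative of $2K_{X'}$ supported on $\pi^{-1}(\operatorname{Sing}(X))$, then combine negative definiteness of the exceptional configuration with minimality of $\pi$ (no exceptional $(-1)$-curves) to kill the positive part, and finally use connectedness of the fibres to describe $\operatorname{supp}(W')$. The only noteworthy difference is organizational: you encode minimality once as the inequality $K_{X'}\cdot E_i\ge 0$ and use it throughout, whereas the paper re-derives this at each step via arithmetic-genus computations; also, your uniqueness argument (pushing $D-W'$ down to $X$ to see that the rational function is constant) is spelled out explicitly, while the paper simply asserts $\dim|-2K_{X'}|=0$ after establishing effectivity.
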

\begin{proof}
   Since $\pi_*(2K_{X'})\sim 0$ by the previous Lemma, then either $2K_{X'}\sim 0$ or there is a bicanonical divisor $2K_{X'}$ on $X'$ with support in $\pi^{-1}(\operatorname{Sing} (X))$. In the latter case, let $2K_{X'}=\sum m_iF_i -\sum n_jG_j$ be the decomposition in reduced and irreducible components, with $m_i,n_j\in \mathbb{N}_{>0}$ and $F_i\neq G_j$, for all $i,j$. Let $F=\sum m_i F_i$ and $G=\sum n_jG_j$.

        \noindent Suppose $F\neq0$. By Mumford's Theorem (see \cite{MUM}, Chapter $1$), we have that $F_i^2<0$ for any $i$ and, because the intersection form on $\pi^{-1}(\operatorname{Sing}(X))$ is negative definite, also $F^2<0$. So there is an $i_0$ such that $F\cdot F_{i_0}<0$. Up to renaming the index, we suppose that $i_0=1$. It is obvious that $F_1\cdot G\geq0$. Since $F_1$ is an irreducible component, then $$0\leq p_a(F_1)=1+\frac{1}{2}F_1\cdot (F_1+K_{X'})=1+\frac{1}{2}F_1^2+\frac{1}{4}F_1\cdot F-\frac{1}{4}F_1\cdot G.$$

        \noindent The only possibility is $F_1^2=-1$. Thus $F_1$ is a $(-1)-$curve, contradicting the minimality of $\pi$. Hence $F=0$.

        \noindent Thus either $2K_{X'}\sim 0$ or there are effective antibicanonical divisors with support in $\pi^{-1}(\operatorname{Sing}(X))$. Then $\dim|-2K_{X'}|=0$.

        \bigskip\noindent  Let $|-2K_{X'}|=\{W'\}$. To conclude the proof, we have only to show that, if $x\in \operatorname{Sing}(X)$ is such that $\pi^{-1}(x)$ meets $\operatorname{supp}(W')$, then $\pi^{-1}(x)$ does not contain curves which are not part of $\operatorname{supp}(W')$.

    \noindent Suppose that there is an irreducible curve $E\subset \pi^{-1}(x)$ which is not part of $\operatorname{supp}(W')$. Since $X$ is normal by Theorem \ref{first properties}, Point $2.$, then $\pi^{-1}(x)$ is connected, so we can assume that $E$ intersects $W'$ and $E\cdot W'>0$. Then $$0\leq p_a(E)=1+\frac{1}{2}E^2+\frac{1}{2}E\cdot K_{X'}=1+\frac{1}{2}E^2-\frac{1}{4}E\cdot W'.$$ Again by Mumford's Theorem, we have $E^2<0$. So the only possible case for which the previous inequality is valid is: $E\cdot W'=2$, $E^2=-1$ and $p_a(E)=0$. This contradicts the minimality of $\pi$.
\end{proof}

\begin{rem}
  By the previous Theorem, we observe that, if $X$ is smooth, then $X=X'$ and $W'\sim 0$. Since $p_a(X)=p_g(X)=0 $ by Theorem \ref{first properties}, Point $1.$, then $X$ is an Enriques surface by \cite{H}, Theorem $V.6.3$.
\end{rem}

\par\bigskip
 \begin{lemma}\label{supp antibican}
   If $W'$ is the unique effective antibicanonical divisor on $X'$, then:

    \noindent a singularity $x\in X$ such that $\pi^{-1}(x)$ does not meet $\operatorname{supp}(W')$ is a rational double point.
     \end{lemma}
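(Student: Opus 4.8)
The plan is to combine the previous theorem with the standard characterization of rational double points (equivalently, rational Gorenstein surface singularities, equivalently ADE singularities) as the canonical/minimal resolutions with trivial discrepancies. First I would fix a singular point $x\in X$ whose exceptional fibre $E_x:=\pi^{-1}(x)$ does not meet $\operatorname{supp}(W')$, and write $E_x=\bigcup_i E_i$ as the union of its reduced irreducible components. The key observation is that, by Theorem \ref{unique abc}, on a neighbourhood of $E_x$ the divisor $2K_{X'}$ is linearly equivalent to $W'$ restricted there, which is $0$ since $W'$ does not meet $E_x$; hence $2K_{X'}\cdot E_i=0$ for every component $E_i$ of $E_x$. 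Actually, even more is true: since $\pi$ is the minimal resolution, $K_{X'}$ is $\pi$-nef, so $K_{X'}\cdot E_i\ge 0$ for all $i$; combined with $2K_{X'}\cdot E_i = 0$ this forces $K_{X'}\cdot E_i=0$ for every component, i.e. the exceptional divisor over $x$ is contracted by the canonical class.

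Next I would run the adjunction/genus argument component by component, exactly in the style of the proof of Theorem \ref{unique abc}. For each irreducible $E_i$, Mumford's theorem gives $E_i^2<0$, and
\[
0\le p_a(E_i)=1+\tfrac12 E_i^2+\tfrac12 E_i\cdot K_{X'}=1+\tfrac12 E_i^2,
\]
so $E_i^2=-2$ and $p_a(E_i)=0$, i.e. each $E_i$ is a smooth rational curve with self-intersection $-2$. Thus $E_x$ is a configuration of $(-2)$-curves with negative-definite intersection matrix (negative definiteness being a general property of exceptional loci of resolutions, by Mumford). This is precisely a configuration of $(-2)$-curves, which by the Du~Val classification of negative-definite configurations of $(-2)$-curves must be a disjoint union of ADE (Dynkin) diagrams $A_n, D_n, E_6, E_7, E_8$; since $E_x$ is connected (as $X$ is normal, by Theorem \ref{first properties}), it is a single ADE configuration.

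Finally I would invoke the equivalence between these $ADE$ configurations and rational double points: by Artin's criterion (or directly by Du~Val's analysis), a singularity whose minimal resolution has exceptional divisor an ADE configuration of $(-2)$-curves is a rational double point. Concretely, since $K_{X'}\cdot E_i=0$ for all $i$ means $K_{X'}\equiv 0$ near $E_x$, the singularity is canonical (log-terminal with integral discrepancies all zero); a two-dimensional canonical singularity is by definition a rational double point. So $x$ is a rational double point, which is the claim.

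The main obstacle — really the only non-formal point — is establishing $K_{X'}\cdot E_i=0$ for every component: one needs that $\operatorname{supp}(W')$ missing $E_x$ genuinely localizes the relation $2K_{X'}\sim W'$ to give $2K_{X'}\cdot E_i=0$, and that $\pi$-minimality yields $K_{X'}\cdot E_i\ge 0$. Both follow from Theorem \ref{unique abc} and the minimality of the resolution, so once that is spelled out, the rest is the classical Du~Val dictionary and requires no further work.
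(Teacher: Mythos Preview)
Your proof is correct. The first half---showing that each irreducible component $E_i$ of $\pi^{-1}(x)$ is a smooth rational $(-2)$-curve via $W'\cdot E_i=0$ and adjunction---is exactly what the paper does (your detour through $\pi$-nefness of $K_{X'}$ is harmless but unnecessary: from $-2K_{X'}\sim W'$ one gets $K_{X'}\cdot E_i=-\tfrac12\,W'\cdot E_i=0$ directly). The second half diverges: you invoke the Du~Val classification of connected negative-definite $(-2)$-configurations, or equivalently the characterization of two-dimensional canonical singularities, to conclude that $x$ is a rational double point; the paper instead works with Artin's fundamental cycle $Z_0=\sum a_iE_i$, computing $p_a(Z_0)=1+\tfrac12 Z_0^2+\tfrac12 Z_0\cdot K_{X'}=1+\tfrac12 Z_0^2<1$, then using $p_a(E_i)\le p_a(Z_0)$ to pin down $p_a(Z_0)=0$ (rationality) and $Z_0^2=-2$ (multiplicity two). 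Your route is cleaner if one is willing to quote the ADE dictionary as a black box; the paper's route is more self-contained, relying only on the elementary combinatorics of the fundamental cycle and avoiding the classification statement.
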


    \begin{proof} 

        Let $x\in X$ be a singularity such that $\pi^{-1}(x)$ does not meet $\operatorname{supp}(W')$. Let $T$ be an irreducible component of the connected component $\pi^{-1}(x)$, then $T\cdot W'=0$. So $$0\leq p_a(T)=1+\frac{1}{2}T^2+\frac{1}{2}T\cdot K_{X'}=1+\frac{1}{2}T^2-\frac{1}{4}T\cdot W'=1+\frac{1}{2}T^2.$$ By Mumford's Theorem $T^2<0$, so the only possible case for which the inequality above is valid is: $T^2=-2$ and $p_a(T)=0$. Then all the irreducible components of $\pi^{-1}(x)$ are smooth rational curves with self-intersection $-2$. We can call these curves $E_i$, for $i=1,...,n$.

        \par\bigskip\noindent We can prove that $x$ must be a rational singularity using \cite{M}, Proposition-Definition $2.1$. Let $Z_0=\sum_{i=1}^{n}a_i\cdot E_i$, for $a_i\geq0$ not all zero, the fundamental cycle associated with $x$.
        First of all, $p_a(Z_0)=1+\frac{1}{2}Z_0^2+\frac{1}{2}Z_0\cdot K_{X'}$. Now $Z_0\cdot K_{X'}=-\frac{1}{2}Z_0\cdot W'=-\frac{1}{2}\sum_{i=1}^{n}a_i E_i\cdot W'=0$, while $Z_0^2<0$ since $Z_0$ is contracted by $\pi$. So $p_a(Z_0)<1$.

        \noindent By \cite{Tom}, Lemma $1.1$, we have that $p_a(E_i)\leq p_a(Z_0)$ for every $E_i$ contained in $Z_0$. Since $p_a(E_i)=0$ as computed before, then the only possible case is $p_a(Z_0)=0$, so $x$ is a rational singularity.

        \par\bigskip\noindent In conclusion, $x\in X$ is a rational double point. Indeed, by the adjunction formula, the self-intersection $Z_0^2=2p_a(Z_0)-2-Z_0\cdot K_{X'}=2p_a(Z_0)-2=-2$.
  \end{proof}

  \begin{rem}
    We have already seen that, if $X$ is birationally equivalent to an Enriques surface, then $X$ can only contain rational singularities.

    \noindent Moreover, by the previous Lemma, we conclude that it can only contain rational double points as singularities.
  \end{rem}

\section{Examples}
\noindent In this chapter, we will construct examples of surfaces with Prym-canonical hyperplane sections.
\subsection{Surfaces with Prym-canonical hyperplane sections birationally equivalent to ruled surfaces}
\par\bigskip\noindent Let us fix some notation about minimal smooth ruled surfaces in which we will follow \cite{H}, Chapter $V.2$.

\par\bigskip\noindent If $X''$ is a minimal smooth ruled surface and $p:X''\rightarrow \Gamma$ is the natural map on the base curve $\Gamma$ of genus $q\geq 0$, then $X''=\mathbb{P}_{\Gamma}(\mathcal{E})$, where $\mathcal{E}$ is a normalized locally free sheaf of rank $2$ on $\Gamma$.

\noindent Let $\wedge^2(\mathcal{E})=\mathcal{O}_{\Gamma}(D)$, for $D\in \operatorname{Div}(\Gamma)$. The integer $e=-\deg(D)$ is an invariant of $X''$.

\par\noindent Let $C_0$ be a section of $p$ such that $\mathcal{O}_{X''}(C_0)=\mathcal{O}_{X''}(1)$. Then $C_0^2=-e$. Moreover, we recall that the canonical divisor $K_{X''}\sim -2C_0+(K_{\Gamma}+D)\cdot f$, where $(K_{\Gamma}+D)\cdot f$ denotes the divisor $p^*(K_{\Gamma}+D)$ by abuse of notation, with $K_{\Gamma}+D$ a divisor on $\Gamma$.

\par\noindent Finally, if $\mathcal{E}$ is decomposable, i.e. $\mathcal{E}=\mathcal{O}_{\Gamma}\oplus\mathcal{O}_{\Gamma}(D)$, then we will denote by $C_1$ a fixed section of $p$ disjoint from $C_0$. Thus $C_1^2=e$ and $C_1\sim C_0-D\cdot f$.

\subsubsection{\textbf{The minimal model is a non-rational ruled surface}}
\par\bigskip\noindent We start recalling a simple example of surface with Prym-canonical hyperplane sections.

\par\bigskip\noindent Let $X''=\mathbb{P}_{\Gamma}(\mathcal{O}_{\Gamma}\oplus \mathcal{O}_{\Gamma}(D))$ be a minimal ruled surface over a base curve $\Gamma$ of genus $q\geq5$, with $D\sim -K_{\Gamma}+\alpha$, where $\alpha$ is a not trivial two torsion divisor and $K_{\Gamma}$ is the canonical divisor of $\Gamma$, and let $L''=|C_1|$ be a linear system on $X''$. By \cite{CDGK}, Lemma $2.1$, if $\Gamma$ is non-hyperelliptic and it does not admit $g^1_4$, then a general hyperplane section of $X''$ is Prym-canonically embedded. The images of fibres of $X''$ by $i_{L''}$ are lines since $C''\cdot f =C_1\cdot f=1$. It is not difficult to prove that $X$ has only one singularity, so the map $i_{L''}:X''\dashrightarrow \mathbb{P}^{q-1}$ defined by the linear system $L''$ is such that $X=i_{L''}(X'')$ is a cone on a Prym-canonical embedded curve and if a general hyperplane section $C$ of $X$ is projectively normal, then the geometric genus of the only singularity is $q$ (see Proposition \ref{sum sing}).

\begin{rem}\label{rem cone} Let us consider $L''\subseteq|aC_0+\Delta\cdot f|$, for $a\geq2$ and $\Delta\in \operatorname{Pic}(\Gamma)$. If $X''=\mathbb{P}_{\Gamma}(\mathcal{O}_{\Gamma}\oplus\mathcal{O}_{\Gamma}(D))$ is a minimal ruled surface over a base curve $\Gamma$ of genus $q\geq5$, then the images of fibres of $X''$ by the map $i_{L''}$ associated with $L''$ are not lines since $C''\cdot f=(aC_0+\Delta\cdot f)\cdot f=a>1$, for $C''$ a general curve in $L''$. Furthermore there is not another family of rational curves on $X''$ mapped into lines by $i_{L''}$ because the genus of the base curve $\Gamma$ is $q>0$. Indeed, by the Riemann-Hurwitz formula (see \cite{H}, Corollary $IV.2.4$), there is not a curve of genus $0$ (not equal to a fibre) on a ruled surface over a base curve of genus $q>0$. Hence we never obtain $X=i_{L''}(X'')$ as a cone.
\end{rem}

\par\bigskip Now we focus our attention on surfaces with Prym-canonical hyperplane sections birationally equivalent to ruled surfaces $X''$ over a non-hyperelliptic base curve of genus $q\geq3$.

\begin{prova}\label{example}
  Let $X''=\mathbb{P}_{\Gamma}(\mathcal{O}_{\Gamma}\oplus \mathcal{O}_{\Gamma}(D))$ be a minimal ruled surface over a non-hyperelliptic smooth base curve $\Gamma$ of genus $q\geq3$, for $D\in \operatorname{Div}(\Gamma)$. Let $L''=|aC_1|$ be a linear system with $a\geq2$. If $D\sim -K_{\Gamma}+\alpha$, for $\alpha$ a non-zero two-torsion divisor, then the image $X=i_{L''}(X'')\subseteq\mathbb{P}^{a^2(q-1)}$ of the morphism associated with $L''$ has Prym-canonical hyperplane sections and only one singularity. In particular, if a general hyperplane section $C$ of $X$ is projectively normal, then the geometric genus of the only singularity $x$ is $p_g(x)=q$.\end{prova}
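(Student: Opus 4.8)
The plan is to realize $X$ concretely through the morphism $i_{L''}$, to identify its single exceptional curve, to put the Prym structure on the generic hyperplane section, and then to read off $p_g(x)$ from Proposition \ref{sum sing}. Throughout I would use that $e=-\deg D=\deg(K_\Gamma-\alpha)=2q-2>0$, so that $C_1^2=e>0$ and $C_0\cdot C_1=0$; that $D\sim -K_\Gamma+\alpha$ gives $K_{X''}\sim -2C_0+p^*\alpha$, which is numerically $-2C_0$; and that $\mathcal O_{X''}(aC_1)=\mathcal O_{X''}(a)\otimes p^*\mathcal O_\Gamma(-aD)$ with $\mathcal O_\Gamma(-D)\sim\mathcal O_\Gamma(K_\Gamma+\alpha)$ the Prym-canonical bundle of $\Gamma$.

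First I would analyse $i_{L''}$. From $\mathcal E=\mathcal O_\Gamma\oplus\mathcal O_\Gamma(D)$ and the projection formula one gets $p_*\mathcal O_{X''}(aC_1)=\bigoplus_{j=0}^a\mathcal O_\Gamma(j(K_\Gamma+\alpha))$, and every summand is globally generated: for $j=0$ trivially, for $j=1$ because $\Gamma$ is non-hyperelliptic so that $|K_\Gamma+\alpha|$ is base point free by \cite{CDGK}, Lemma $2.1$, and for $j\ge2$ because its degree $j(2q-2)$ is $\ge 2q$. Hence $|aC_1|$ is base point free, $i_{L''}$ is a morphism, and Riemann--Roch gives $h^0(\mathcal O_{X''}(aC_1))=1+a^2(q-1)$, so that $X\subseteq\mathbb P^{a^2(q-1)}$. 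Since $C_0\cap C_1=\varnothing$ we have $\mathcal O_{X''}(aC_1)|_{C_0}\cong\mathcal O_{C_0}$, so $i_{L''}$ contracts $C_0$ to a point $x_0$, and it contracts nothing else: a curve $Z$ with $aC_1\cdot Z=0$ has numerical class a multiple of $C_0$, which forces $Z=C_0$. For $q\ge3$ the bundle $\mathcal O_\Gamma(a(K_\Gamma+\alpha))$ has degree $a(2q-2)\ge 2q+1$, hence is very ample, and a direct check — $i_{L''}$ embeds every fibre as a rational normal curve of degree $a$, while distinct fibres and the directions transverse to them are separated by sections coming from $|a(K_\Gamma+\alpha)|$ — shows that $i_{L''}$ restricts to an isomorphism $X''\setminus C_0\xrightarrow{\ \sim\ }X\setminus\{x_0\}$. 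Finally $x_0$ is singular, for otherwise $C_0$ would be a $(-1)$-curve contracted to a smooth point, contradicting $C_0^2=2-2q\le-4$. Thus $\operatorname{Sing}(X)=\{x_0\}$ and $i_{L''}\colon X''\to X$ is the minimal resolution.

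Next I would treat the generic hyperplane section. A general hyperplane section $C$ of $X$ is the isomorphic image of a general member of $|aC_1|$; by Bertini's theorems (the image $X$ being a surface) it is smooth and irreducible, and it is disjoint from $C_0$ since $C\cdot C_0=0$ and $|aC_1|$ has no fixed component. Adjunction gives $2p_a(C)-2=C^2+C\cdot K_{X''}=a^2e$, so $C$ has genus $g:=a^2(q-1)+1\ (\ge 9)$, and $\mathcal O_C(C)\cong\omega_C\otimes\omega_{X''}^{-1}|_C$; since $C\cap C_0=\varnothing$ we get $\mathcal O_{X''}(C_0)|_C\cong\mathcal O_C$ and hence $\omega_{X''}^{-1}|_C\cong(p|_C)^*\mathcal O_\Gamma(-\alpha)=:\beta$, a $2$-torsion class, with $\mathcal O_C(C)\cong\omega_C(\beta)$. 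The key step — which I expect to be the main obstacle — is to show $\beta\ne 0$. If $(p|_C)^*\alpha=0$, then $p|_C$ lifts along the étale double cover $\Gamma_\alpha\to\Gamma$ determined by $\alpha$ (so $a$ is even), and $C\hookrightarrow X''$ together with the lift gives a closed embedding $C\hookrightarrow\hat X:=X''\times_\Gamma\Gamma_\alpha$, a geometrically ruled surface over $\Gamma_\alpha$ carrying an étale double cover $\hat X\to X''$ that maps $C$ isomorphically onto $C\subset X''$. As that cover is étale, $C^2$ computed in $\hat X$ is still $a^2e$; on the other hand, the pullback of $D$ to $\Gamma_\alpha$ has negative degree, so $\hat X$ has a unique irreducible curve of negative self-intersection, and this curve together with the class of a fibre — hence also $[C]$ — is fixed by the deck involution, so $2[C]=a\hat C_1$ in $\operatorname{Num}(\hat X)$, where $\hat C_1$ is the reduced pullback of $C_1$, a section with $\hat C_1^2=2C_1^2=2e$; thus $C^2=\tfrac{a^2}{4}\hat C_1^2=\tfrac{a^2e}{2}$, contradicting $C^2=a^2e$ since $e>0$. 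Hence $\beta$ is a non-zero $2$-torsion class (the argument in fact works for every smooth member of $|aC_1|$), $(C,\beta)$ is a Prym curve, and $\mathcal O_C(C)\cong\omega_C(\beta)$. Finally, since $X\subseteq\mathbb P^{g-1}$ is embedded by the complete system $|aC_1|$ it is linearly normal with $h^0(\mathcal O_X(1))=g$; restricting sections to $C$ kills exactly the hyperplane through $C$, so the restriction surjects onto $H^0(\mathcal O_C(C))$, which has dimension $g-1=h^0(\mathcal O_{\mathbb P^{g-2}}(1))$; hence $C$ is linearly normal in its $\mathbb P^{g-2}$, i.e. embedded by $|\omega_C(\beta)|$. (Alternatively one invokes Remark \ref{rem cone} to see $X$ is not a cone, so $C$ is non-degenerate in $\mathbb P^{g-2}$.) Therefore $X$ has Prym-canonical hyperplane sections.

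It remains to identify $p_g(x_0)$. By construction $X$ is birationally equivalent, via $i_{L''}$, to the ruled surface $X''$ over the genus-$q$ curve $\Gamma$; if moreover the general hyperplane section $C$ is projectively normal in $\mathbb P^{g-2}$, then Theorem \ref{first properties} and Proposition \ref{sum sing} apply, and since $\operatorname{Sing}(X)=\{x_0\}$ consists of the single point $x_0$, Proposition \ref{sum sing} yields $p_g(x_0)=\sum_i p_g(x_i)=q$. Besides the nonvanishing of $\beta$, the only other delicate point is the verification, in the second paragraph, that $i_{L''}$ is an embedding away from $C_0$.
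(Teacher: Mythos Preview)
Your proof is correct and follows the paper's overall architecture (base-point freeness of $|aC_1|$, dimension count, isomorphism off $C_0$, Prym structure on the general $C$, and Proposition~\ref{sum sing} for $p_g(x_0)=q$), but you handle the key nontriviality of $\beta=-K_{X''}|_{C}$ by a genuinely different argument. The paper proceeds cohomologically: it first computes $h^0(-K_{X''})=0$ and $h^0(-2K_{X''})=1$ via \cite{FP}, Lemma~35, then shows $h^0(\mathcal O_{C''}(-K_{X''}))=0$ from the restriction sequence together with Kodaira vanishing for $K_{X''}+C''$ (ample when $a\ge3$), and treats $a=2$ separately by invoking Epema's result \cite{Epe1} on surfaces with canonical hyperplane sections. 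Your \'etale double-cover argument on $\hat X=X''\times_\Gamma\Gamma_\alpha$ is uniform in $a$, avoids vanishing theorems and the external reference for $a=2$, and is quite clean; one could even sharpen it by noting that $[C_{\hat X}]$ and $[\sigma C_{\hat X}]$ are numerically equal yet disjoint, forcing $C_{\hat X}^2=0$ directly. For the embedding off $C_0$, the paper quotes \cite{FP}, Theorem~38, reducing to the very ampleness of $-aD$ and base-point freeness of $|-jD|$ for $j\le a$; your ``direct check'' via surjectivity of $H^0(X'',aC_1)\to H^0(f_\gamma,\mathcal O(a))$ (each summand of $p_*\mathcal O(aC_1)$ being globally generated) and separation of fibres by the very ample top summand $|a(K_\Gamma+\alpha)|$ amounts to the same verification and would benefit from being written out in full.
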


\begin{proof}
  It is easy to prove that the linear system $L''=|aC_1|$ is base-point free using \cite{FP}, Proposition $36$, for any $a\in \mathbb{N}_{\geq2}$. Moreover, since $(C'')^2>0$, for $C''$ a general element of $L''$, then, by Bertini's Theorem, $C''$ is smooth and irreducible.

  \par\bigskip\noindent $\mathrm{CLAIM \hspace{0.2cm}1:}$ We have that $-K_{X''}$ is not effective, while $-2K_{X''}$ is.

  \begin{proof} We know that $h^0(\mathcal{O}_{X''}(-2K_{X''}))=h^0(\mathcal{O}_{X''}(4C_0))=h^0(\mathcal{O}_{\Gamma})+h^0(\mathcal{O}_{\Gamma}(D))+h^0(\mathcal{O}_{\Gamma}(2D))+h^0(\mathcal{O}_{\Gamma}(3D))+h^0(\mathcal{O}_{\Gamma}(4D))$ by \cite{FP}, Lemma $35$. Because $\deg(D)=2-2q<0$, then $h^0(\mathcal{O}_{X''}(-2K_{X''}))=1$ and $-2K_{X''}$ is effective.

  \par\bigskip\noindent On the other hand, we have that $h^0(\mathcal{O}_{X''}(-K_{X''}))=h^0(\mathcal{O}_{X''}(2C_0-(K_{\Gamma}+D)\cdot f))= h^0(\mathcal{O}_{\Gamma}(-K_{\Gamma}-D))+h^0(\mathcal{O}_{\Gamma}(-K_{\Gamma}))+h^0(\mathcal{O}_{\Gamma}(-K_{\Gamma}+D))$ by \cite{FP}, Lemma $35$. Since $\deg(D-K_{\Gamma})=4-4q<0$ and $\deg(-K_{\Gamma})<0$, then $h^0(-K_{X''})=\linebreak h^0(\mathcal{O}_{\Gamma}(-K_{\Gamma}-D))=h^0(\mathcal{O}_{\Gamma}(-\alpha))=0$, so $-K_{X''}$ is not effective. \end{proof}

  \par\bigskip\noindent $\mathrm{CLAIM \hspace{0.2cm}2:}$ We prove that $\mathcal{O}_{C''}(-K_{X''})\ncong \mathcal{O}_{C''}$ while $\mathcal{O}_{C''}(-2K_{X''})\cong \mathcal{O}_{C''}$, where $C''\in L''$ is a general curve.

\begin{proof} Since $-2K_{X''}$ is effective as seen in Claim $1$ and $C''\cdot(-2K_{X''})=\linebreak(aC_0-aD\cdot f)\cdot (4C_0)=4aC_0^2-4a\deg(D)=0$, then $\mathcal{O}_{C''}(-2K_{X''})\cong \mathcal{O}_{C''}$.

\par\bigskip\noindent Clearly also $C''\cdot(-K_{X''})=0$. Since $h^0(\mathcal{O}_{X''}(-K_{X''}))=0$ as seen in Claim $1$, if we prove that $h^1(\mathcal{O}_{X''}(-K_{X''}-C''))=0$, then, from the exact sequence $$0\rightarrow \mathcal{O}_{X''}(-K_{X''}-C'')\rightarrow \mathcal{O}_{X''}(-K_{X''})\rightarrow \mathcal{O}_{C''}(-K_{X''})\rightarrow 0,$$ we have that $h^0(\mathcal{O}_{C''}(-K_{X''}))=0$, which implies that $\mathcal{O}_{C''}(-K_{X''})\ncong \mathcal{O}_{C''}$.

\noindent Thus, by Serre Duality, we have that $h^1(\mathcal{O}_{X''}(-K_{X''}-C''))=h^1(\mathcal{O}_{X''}(2K_{X''}+C''))$. If we prove that $K_{X''}+C''$ is ample, then, by the Kodaira vanishing Theorem (see \cite{H}, Remark $III.7.15$), we have that $h^1(\mathcal{O}_{X''}(2K_{X''}+C''))=0$ and the claim is proved.

\noindent By \cite{H}, Proposition $V.2.20$, if $a>2$, then $K_{X''}+C''$ is ample and the claim is satisfied, instead, if $a=2$, we have that $K_{X''}+C''$ is not ample. About that, let us suppose that $\mathcal{O}_{C''}(-K_{X''})\cong \mathcal{O}_{C''}$. Then the image of $X''$ by $i_{L''}$ is a surface with canonical hyperplane sections. By \cite{Epe1}, Corollary $5.4$, $X''$ contains only one effective anticanonical divisor. This contradicts Claim $1$, then this claim is also satisfied for $a=2.$\end{proof}

  \par\bigskip\noindent
\noindent We know that $h^0(\mathcal{O}_{\Gamma})=1$. Using the Riemann-Roch Theorem, we also have that $h^0(\mathcal{O}_{\Gamma}(-D))=h^0(\mathcal{O}_{\Gamma}(\alpha))+2q-2+1-q=(2q-2)+1-q$ and $h^0(\mathcal{O}_{\Gamma}(-mD))=m(2q-2)+1-q$, for any $m\in \mathbb{N}_{>1}$. Hence, using \cite{FP}, Lemma $35$, we obtain that $$h^0(\mathcal{O}_{X''}(L''))=(1+...+a)(2q-2)+a(1-q)+1=$$$$=a(a+1)(q-1)+a(1-q)+1=a^2(q-1)+1.$$

%
%
%

  \noindent So $X=i_{L''}(X'')$ is contained in $\mathbb{P}^{a^2(q-1)}$, for $a^2(q-1)\geq4\cdot 2=8$.

\par\bigskip
  \par\bigskip\noindent $\mathrm{CLAIM \hspace{0.2cm}3:}$ It remains to show that $L''$ defines a birational morphism $i_{L''}$, in particular an isomorphism outside $C_0$. So $i_{L''}|_{C''}$ is a Prym-canonical embedding, for $C''\in L''$ a general divisor.

\begin{proof}
  \par\bigskip We can prove that $L''$ defines a birational map, in particular an isomorphism outside $C_0$, using \cite{FP}, Theorem $38$. Indeed, this happens if $-aD$ is very ample and if $|-aD+D|$, $|-aD+(a-1)D|=|-D|$ and $|-aD+aD|$ are base-point free.

  \noindent The last case is trivial. By \cite{H}, Corollary $IV.3.2$, since $\deg(-aD)=a(2q-2)\geq 4q-4=2q+(2q-4)\geq2q+1$, then $-aD$ is very ample. Again by \cite{H}, Corollary $IV.3.2$, if $a\geq3$, since $\deg(-aD+D)=a(2q-2)+(2-2q)=(a-1)(2q-2)\geq 4q-4=2q+(2q-4)\geq2q$, then $|-aD+D|$ is base-point free.

  \noindent It remains to show that $|-D|$ is base-point free. Since $D\sim -K_{\Gamma}+\alpha$, if $|-D|$ has base points, then $\Gamma$ must be hyperelliptic (see \cite{CDGK}, Lemma $2.1$). This contradicts our assumptions. So $L''$ defines an isomorphism outside $C_0$, called $i_{L''}$ .

  \bigskip\noindent A general $C''\sim aC_1$ in $L''$ is disjoint from $C_0$ by definition, then $$i_{L''}|_{C''}:C''\rightarrow \mathbb{P}^{a^2(q-1)-1}$$ is an embedding. By the adjunction formula, we have that $$L''|_{C''}\cong K_{C''}-K_{X''}|_{C''}$$ but in Claim $2$ we have already proved that $-K_{X''}|_{C''}$ is a non trivial two-torsion divisor, so $i_{L''}|_{C''}$ is a Prym-canonical embedding.
  \end{proof}

\par\bigskip\noindent We observe that, since $i_{L''|_{C''}}:C''\rightarrow\mathbb{P}^{g-2}$ by definition of Prym-canonical map, then $g=g(C'')=a^2(q-1)+1.$

\noindent The image $x\in X$ of $-2K_{X''}\sim 4C_0$ by $i_{L''}$ is a singular point. There are not other possible singularities because $L''$ is an isomorphism outside $C_0$. We have found examples of surfaces in $\mathbb{P}^{a^2(q-1)}$ with Prym-canonical hyperplane sections birationally equivalent to non-rational ruled surfaces, for $a\geq 2$ and $q\geq3$.

\noindent If a general hyperplane section $C$ of $X$ is projectively normal, then, by Proposition \ref{sum sing}, the singularity $x$ has geometric genus equal to $q$.


\end{proof}

\par\bigskip We can construct an example of surface with Prym-canonical hyperplane sections birationally equivalent to an elliptic ruled surface $X''$.

\begin{exa}
  Let $\Gamma$ be an elliptic curve and let $X''=\mathbb{P}_{\Gamma}(\mathcal{O}_{\Gamma}\oplus \mathcal{O}_{\Gamma}(D))$ be a minimal ruled surface with base curve $\Gamma$, for $D\in \operatorname{Div}(\Gamma)$. If $Q_i$, for $i=1,2,3$, is a general point on $\Gamma$ and $\alpha,\beta\in\Gamma$ are two points such that $\alpha-\beta$ is a two torsion element of $\Gamma$, then we assume that $D=-Q_1-Q_2-Q_3-\alpha+\beta$. So $e=-\deg(D)=3$.

  \par\bigskip\noindent We consider the linear system $|3C_1|=|3C_0+3(Q_1+Q_2+Q_3+\alpha-\beta)\cdot f|$. It is easy to prove that this linear system is base-point free using \cite{FP}, Proposition $36$, so, by Bertini's Theorem, its general element $\mathcal{L}$ is smooth and, since $\mathcal{L}^2=9C_1^2=9e>0$, it is also irreducible.

  \noindent We call $f_i:=Q_i\cdot f$, for $i=1,2,3$. For any fibre $f$, we have that $\mathcal{L}\cdot f=3$, so we can fix the $9$ points $$Z:=\{x_{1,1},x_{1,2},x_{1,3},x_{2,1},x_{2,2},x_{2,3},x_{3,1},x_{3,2},x_{3,3}\}$$ of intersection between $\mathcal{L}$ and $f_i$, for $i=1,2,3$. Since $3C_1$ is disjoint from $C_0$, we can assume that $Z\cap C_0=\varnothing$. So we can consider the linear system $L''\subset|3C_1|$ on $X''$ with $Z$ as base locus. In particular, we suppose that every curve $C''\in L''$ simply passes through the $9$ points, so $\mathcal{L}$ is an element of $L''$. By \cite{FP}, Lemma $35$, we have that $h^0(\mathcal{O}_{X''}(3C_0+3(Q_1+Q_2+Q_3+\alpha-\beta)\cdot f))=19$, so $\dim L''\geq18-9=9$. Since $\mathcal{L}\in L''$ and smoothness is an open condition, then the general element $C''$ of $L''$ is smooth.

  \par\bigskip\noindent We know that $-K_{X''}\sim 2C_0-D\cdot f$ and $h^0(\mathcal{O}_{X''}(-K_{X''}))=4$ by \cite{FP}, Lemma $35$.

  \noindent We can show that $h^0(\mathcal{O}_{X''}(-K_{X''}-\mathcal{I}_{Z}))=0$. Indeed, if we suppose that there is an effective divisor $T\in |-K_{X''}-\mathcal{I}_{Z}|$, then $T\cdot C_0=2C_0^2-\deg(D)=2(-3)+3<0$, so $C_0$ is a fixed component of $T$. Moreover $T\cdot f=2$ but $T$ contains $3$ points of the fibres $f_i$, so it also contains $f_1,f_2,f_3$. Then $$|-K_{X''}-\mathcal{I}_{Z}|=|2C_0-D\cdot f-\mathcal{I}_{Z}|=C_0+f_1+f_2+f_3+|C_0+(\alpha-\beta)\cdot f|,$$ so $$\dim |-K_{X''}-\mathcal{I}_{Z}|=\dim |C_0+(\alpha-\beta)\cdot f|.$$

  \noindent By \cite{FP}, Lemma $35$, we have that $h^0(\mathcal{O}_{X''}(C_0+(\alpha-\beta)\cdot f))=h^0(\mathcal{O}_{\Gamma}(\alpha-\beta))+h^0(\mathcal{O}_{\Gamma}(\alpha-\beta-Q_1-Q_2-Q_3-\alpha+\beta))=0$. Hence $T$ effective does not exist.

  \par\bigskip\noindent It is not difficult to prove that $h^0(\mathcal{O}_{X''}(-2K_{X''}))=10$ and, since $-2K_{X''}\sim 4C_0+2(Q_1+Q_2+Q_3)\cdot f$ and $4C_0+2(Q_1+Q_2+Q_3)\cdot f$ contains $Z$ with multiplicity $2$, then also $h^0(\mathcal{O}_{X''}(-2K_{X''})\otimes\mathcal{I}_{Z})>0$.


  \par\bigskip\noindent Let $\phi: X'\rightarrow X''$ be the blowing up of $X''$ along the $9$ points defining $Z$. Let $E_{i,j}\in X'$ be the exceptional divisor of $x_{i,j}$, for $i,j\in\{1,2,3\}$, and let $\widetilde{f}_i$ be the strict transform of $f_i$, for $i=1,2,3$. With abuse of notation, we call $C_0:=\phi^*(C_0)$ (we remark that $\phi^*(C_0)$ is the strict transform of $C_0$ since $Z\cap C_0=\varnothing$), $\phi^*(\alpha\cdot f):=f_{\alpha}$ and $\phi^*(\beta\cdot f):=f_{\beta}$. Let $L'$ be such that $L''=\phi_*L'$. Then the strict transform $C'\in L'$ of a general $C''\in L''$ is of the form
  \begin{eqnarray*}
           C' =  \phi^*(C'')- E_{1,1}-...-E_{3,3} &\sim & 3C_0+3\sum_{i=1}^{3}\widetilde{f_i}+3f_{\alpha}-3f_{\beta}+\end{eqnarray*}\begin{eqnarray*}+2(E_{1,1}+...+E_{3,3}).
   \end{eqnarray*}

\bigskip\noindent Instead, using \cite{H}, Proposition $V.3.3$,  we obtain that \begin{eqnarray*}
                     -K_{X'}= \phi^*(-K_{X''})- E_{1,1}-...-E_{3,3}&\sim & 2C_0+\sum_{i=1}^{3}\widetilde{f_i}+f_{\alpha}-f_{\beta}
                   \end{eqnarray*}
\noindent and $$-2K_{X'}\sim 4C_0+2\widetilde{f_1}+2\widetilde{f_2}+2\widetilde{f_3}.$$

\par\bigskip\noindent It is clear that $h^0(\mathcal{O}_{X'}(-K_{X'}))=h^0(\mathcal{O}_{X''}(-K_{X''}-\mathcal{I}_{Z}))=0$ while $h^0(\mathcal{O}_{X'}(-2K_{X'}))=h^0(\mathcal{O}_{X''}(-2K_{X''}-\mathcal{I}_{Z}))>0$.

\par\bigskip Step by step, we can prove that the general hyperplane section $C'$ of $X'$ is a Prym-canonical embedded curve.

\par\bigskip\noindent $\mathrm{CLAIM \hspace{0.2cm}1:}$ We have that $\mathcal{O}_{C'}(-K_{X'})\ncong \mathcal{O}_{C'}$ and $\mathcal{O}_{C'}(-2K_{X'})\cong\mathcal{O}_{C'}$, where $C'\in L'$ is a general curve. In particular, $-K_{X'}|_{C'}$ is a non-zero two torsion divisor.

\begin{proof} The intersection $$C'\cdot( -K_{X'})=6C_0^2+6\sum_{i=1}^{3}C_0\cdot \widetilde{f_i}+3\sum_{i=1}^{3}(C_0\cdot \widetilde{f_i}+\widetilde{f_i}^2)+2[\widetilde{f_1}\cdot(E_{1,1}+E_{1,2}+E_{1,3})+$$$$\widetilde{f_2}\cdot(E_{2,1}+E_{2,2}+E_{2,3})+\widetilde{f_3}\cdot(E_{3,1}+E_{3,2}+E_{3,3})]=6(-3)+6(3)+3(3-9)+2(3+3+3)=0$$
\par\bigskip\noindent and clearly also $C'\cdot (-2K_{X'})=0$.

\noindent Since $-2K_{X'}$ is effective, then the antibicanonical divisor of $X'$ is contracted by $i_{L'}$, in particular $\mathcal{O}_{C'}(-2K_{X'})\cong \mathcal{O}_{C'}$.

\noindent On the contrary, we have that $h^0(\mathcal{O}_{X'}(-K_{X'}))=0$, so $-K_{X'}$ is not effective. As seen in Claim $2$ of Proposition \ref{example}, if we proved that $h^1(\mathcal{O}_{X'}(-K_{X'}-C'))=0$, then $\mathcal{O}_{C'}(-K_{X'})\ncong \mathcal{O}_{C'}$.

\par\bigskip\noindent Thus, by Serre Duality, it is clear that $h^1(\mathcal{O}_{X'}(-K_{X'}-C'))=h^1(\mathcal{O}_{X'}(2K_{X'}+C'))$. If we prove that $K_{X'}+C'$ is big and nef, then, by the Kawamata-Viehweg vanishing Theorem (see \cite{K} and \cite{V}), the first cohomology $h^1(\mathcal{O}_{X'}(2K_{X'}+C'))=0$.

\par\bigskip\noindent In our case, since $2(\alpha-\beta)\sim 0$, then $2f_{\alpha}-2f_{\beta}\sim 0$ and the divisor $$K_{X'}+C'\sim C_0+2\widetilde{f_1}+2\widetilde{f_2}+2\widetilde{f_3}+2E_{1,1}+...+2E_{3,3}.
$$

\noindent Since $K_{X'}+C'$ is written as sum of irreducible and effective curves, then, to prove that $K_{X'}+C'$ is nef, it is enough to prove that $(K_{X'}+C')\cdot \delta\geq0$, for any its irreducible component $\delta$.
With some simple computations we obtain that this is true and because we have strictly positive intersections between $K_{X'}+C'$ and its components, then $K_{X'}+C'$ is also big. So the claim is satisfied.\end{proof}

\par\bigskip\noindent It is not difficult to compute that $C'^2=18$, so, by the adjunction formula, the genus $g(C')=1+\frac{1}{2}(C'^2+K_{X'}\cdot C')=10$. We also observe that $C'$ is smooth because it is the strict transform of a general element $C''$ of $L''$, that is smooth. Since $-K_{X'}|_{C'}$ is a non-zero two torsion divisor as seen in Claim $1$, we have that $L'|_{C'}= |K_{C'}-K_{X'}|_{C'}|$ defines a Prym-canonical map $$\phi_{L'|_{C'}}:C'\dashrightarrow \mathbb{P}^8.$$

\par\bigskip\noindent $\mathrm{CLAIM \hspace{0.2cm}2:}$ The rational map $\phi_{L'|_{C'}}:C'\dashrightarrow \mathbb{P}^8$ is an embedding, for any general curve $C'\in L'$.

\begin{proof}
  First of all, we know that, if $L'|_{C'}$ has base points, then $C'$ is hyperelliptic by \cite{CDGK}, Lemma $2.1$. Moreover, since $C'\cdot \widetilde{f}=3$, where $\widetilde{f}$ is the pullback of a general fibre $f$ of $X''$, then $C'$ is also a covering $3:1$ of the elliptic curve $\Gamma$.
  This is not possible by Castelnuovo-Severi inequality otherwise we would have $10=g(C')\leq2\cdot0+3\cdot1+1\cdot 2=5$ (see \cite{A}).
Thus $L'|_{C'}$ is base-point free.

  \par\bigskip\noindent Thanks to \cite{CDGK}, Corollary $2.2$, we know that, if $C'$ is not bielliptic, then $L'|_{C'}$ is an embedding. Because $C'$ is a triple cover of $\Gamma$ as observed before, then $C'$ cannot be bielliptic again by Castelnuovo-Severi inequality otherwise we would have $10\leq2\cdot 1+3\cdot 1+1\cdot 2=7$. Thus the claim is proved.
\end{proof}

\par\bigskip\noindent At this point, since $L'|_{C'}$ is base-point free, it is clear that $L'$ is also base-point free.

\noindent Since the restriction $L'|_{C'}$ defines an embedding for each generic curve $C'\in L'$, then $\phi_{L'}$ is a birational map, generically $1:1$.

\par\bigskip\noindent Before we have showed that $\dim(L'')=\dim(L')\geq9$. From the exact sequence $$0\rightarrow \mathcal{O}_{X'}(C'-C')\rightarrow \mathcal{O}_{X'}(C')\rightarrow \mathcal{O}_{C'}(C')\rightarrow 0,$$ we conclude that $h^0(\mathcal{O}_{X'}(C'))\leq10$ since $\mathcal{O}_{X'}(C'-C')\cong \mathcal{O}_{X'}$ and $h^0(\mathcal{O}_{C'}(C'))=9$. So we have that $h^0(\mathcal{O}_{X'}(C'))=10$.

\par\bigskip\noindent Then $X'$ has hyperplane sections that are Prym-canonically embedded and, in particular, we have found a new surface $X=\phi_{L'}(X')\subset \mathbb{P}^9$ with Prym-canonical hyperplane sections. Since the antibicanonical divisor of $X'$ is connected, then its image $x\in X$ by $\phi_{L'}$ is a singular point. There are other possible rational double singularities on $X$ whose exceptional divisors on $X'$ do not intersect $-2K_{X'}$.

\noindent If a general hyperplane section $C$ of $X$ is projectively normal, then, by Proposition \ref{sum sing}, the geometric genus $p_g(x)$ is equal to $1$.
\end{exa}

\par\bigskip\begin{rem}
  We can compute how many moduli the couple $(X'',L'')$ of previous example depends on.

  \bigskip\noindent The choice of the elliptic curve $\Gamma$ depends on one parameter. In addition we fix a divisor $D=-Q_1-Q_2-Q_3-\alpha+\beta$ of degree $-3$, where $Q_i$ is a general point on $\Gamma$, for $i=1,2,3$, and $\alpha-\beta$ is a non-zero two torsion element of $\Gamma$.

  \noindent We know that there are only three non-zero two torsion points on $\Gamma$. Instead we observe that $|Q_1+Q_2+Q_3|$ is a linear system of dimension $2$, so the choice of $\mathcal{O}_{\Gamma}(D)$ depends on $3-2=1$ parameter.

   \noindent Moreover, every automorphism of $\Gamma$ lifts to an automorphism of $X''$ that means that, if $\phi:\Gamma\rightarrow \Gamma$ is an automorphism, then $X''=\mathbb{P}_{\Gamma}(\mathcal{O}_{\Gamma}\oplus\mathcal{O}_{\Gamma}(D))\cong \linebreak \mathbb{P}_{\Gamma}(\mathcal{O}_{\Gamma}\oplus\mathcal{O}_{\Gamma}(\phi^*(D)))$. The group $\operatorname{Aut}(\Gamma)$ has dimension $1$.

  \bigskip\noindent To construct the surface with Prym-canonical hyperplane sections of the previous example, we also fix a linear system $L''\subset|3C_1|=|3C_0-3D\cdot f|$ with $9$ simple base points. The linear system $|3C_1|$ depends on the parameters fixed before. Instead the $9$ simple base points are the points of intersection between a general element $\mathcal{L}\in |3C_1|$ and the three fibres $f_i:=Q_i\cdot f$, for $i=1,2,3$. The choice of the effective divisor in a linear system of the type $|Q_1+Q_2+Q_3|$ that defines the
  three fibres $f_1,f_2,f_3$ depends on $2$ parameters. In addition, as seen in the previous example, the nine points $\{x_{1,1},x_{1,2},x_{1,3},x_{2,1},x_{2,2},x_{2,3},x_{3,1},x_{3,2},x_{3,3}\}$ impose independent conditions on the linear system $L''$, so they depend on $9$ parameters.

  \par\bigskip\noindent The choice of the pair $(X'',L'')$ depends on $1+1-1+2+9=12$ parameters.

  \par\bigskip\noindent We know that $\dim(\operatorname{Aut}(\mathbb{P}^3))=15$. We can consider $X''$ as the blowing up of the vertex of the cone $C_{X''}$ on a plane cubic of $\mathbb{P}^3$. If $C_{\Gamma}$ is the base curve of $C_{X''}$, there are $\infty^8$ plane cubics isomorphic to $C_{\Gamma}$. Since we can choose the vertex among all the possible points of $\mathbb{P}^3$ obtaining always isomorphic cones, then there are $\infty^{(8+3)}=\infty^{11}$ isomorphic cones of the type of $C_{X''}$ in $\mathbb{P}^3$.

  \noindent Thus there are $\infty^4$ automorphism of $\mathbb{P}^3$ that fix $X''$ so, in conclusion, the couple $(X'',L'')$ depends on $12-4=8$ parameters.

  \par\bigskip\noindent Since the surface constructed in the previous example depends on $8$ moduli while a general Enriques surface depends on $10$ moduli, then the generic Enriques surface can degenerate to one of these surfaces since they depend on less parameters.
\end{rem}

\subsubsection{\textbf{The minimal model is a rational ruled surface}}
\par\bigskip\noindent We construct an example of surface with Prym-canonical hyperplane sections birationally equivalent to a rational ruled surface.

\begin{exa}
  Let $\Gamma$ be a rational smooth curve and let $X''=\mathbb{P}_{\Gamma}(\mathcal{O}_{\Gamma}\oplus\mathcal{O}_{\Gamma}(D))$ be a minimal ruled surface with base curve $\Gamma$, for $D\in \operatorname{Div}(\Gamma)$. We assume that $e=4$, so $\deg(D)=-4$. Hence $X''$ is a Hirzebruch surface $F_4$.

  \par\bigskip\noindent We know that $-K_{X''}\sim2C_0-(K_{\Gamma}+D)\cdot f$, where $\deg(-K_{\Gamma}-D)=2+4=6$. We put $$-K_{X''}=2C_0+2\sum_{i=1}^{3}F_i,$$ where $F_1,F_2$ and $F_3$ are distinct and fixed fibres. We also set $$W''=4C_0+3F+3\sum_{i=1}^{3}F_i\in |-2K_{X''}|,$$ where $F$ is a generic fibre distinct from $F_i$, for $i=1,2,3$.

  \par\bigskip\noindent We consider the linear system $|4C_1|=|4C_0-4D\cdot f|$ on $X''$. Every element in $|4C_1|$ intersects every fibre of $X''$ in $4$ points since $4C_1\cdot f=4$. In addition, we know that $h^0(\mathcal{O}_{X''}(4C_1))=45$ by \cite{FP}, Lemma $35$.

  \par\bigskip\noindent
$\mathrm{CLAIM \hspace{0.2cm}1:}$ There is a smooth curve $\mathcal{L}\in |4C_1|$ such that $\mathcal{L}$ is tangent to $F$, $F_1$, $F_2$ and $F_3$ respectively in two points.

\begin{proof} We can assume that $X''=F_4$ is the blowing up of the vertex of a cone in $\mathbb{P}^5$ on a rational normal curve of $\mathbb{P}^4$. It is clear that $C_0$ is the exceptional divisor associated with the vertex.

  \noindent Let $E=F+F_1+F_2+F_3$ be the curve intersection between the cone and a hyperplane $H_0$ of $\mathbb{P}^5$ passing through the vertex of the cone. With abuse of notation, the total transform of $E$ on $X''$ is $E=C_0+F+F_1+F_2+F_3$.

  \par\bigskip\noindent It is obvious that the linear systems $|2C_1|$ and $|3C_1|$ are base-point free. Then they respectively contain a general quadric $Q$ and a general cubic $C$, that are smooth by Bertini's Theorem.

  \noindent It is clear that $2Q$ intersects every fibre in two points with multiplicity $2$. We call $\{x_j,x_{1,j},x_{2,j},x_{3,j}\}$, for $j=1,2$, the intersections points between $2Q$ and $F+F_1+F_2+F_3$.

  \noindent Let us consider a pencil $\mathcal{P}$ generated by $2Q$ and $E+C$. By Bertini's Theorem, its curves may have singular points only on the base locus of the pencil. At this point we observe that $2Q\cdot C\sim 2(2C_1)\cdot (3C_1)=12\cdot 4=48$ since $C_1$ is a rational normal curve of degree $4$. These $48$ points are base points for the pencil, different from $\{x_j,x_{1,j},x_{2,j},x_{3,j}\}$, with $j=1,2$, for the generality of $C$. Now $E+C$ has only $16$ singular points since $E$ has only $4$ singular points on $C_0$, while $C$ is smooth and disjoint from $C_0$ for its generality and $E\cdot C\sim (C_0+4F)\cdot 3C_1=12$. Since $Q$ is also disjoint from $C_0$ and it is general, then these $16$ points are different from the $48$ base points. Then $E+C$ is smooth in the $48$ base points. The same is true for $2Q$. Hence also a general divisor $\mathcal{L}$ in the pencil $\mathcal{P}$ is smooth in the $48$ base points.

  \noindent Instead $2Q|_{E}=\{x_1,x_2,x_{1,1},x_{1,2},x_{2,1},x_{2,2},x_{3,1},x_{3,2}\}$. These are other $8$ base points for $\mathcal{P}$. Since $2Q$ passes through $\{x_j,x_{1,j},x_{2,j},x_{3,j}\}$, for $j=1,2$, with multiplicity $2$ and since $E+C$ simply passes through the eight points ($E$ contains the fibres $F,F_1,F_2,F_3$ and $C$ does not contain these $8$ points), then a general curve $\mathcal{L}$ is smooth in these $8$ points and, in particular, it is tangent to $F,F_1,F_2,F_3$ in $\{x_1,x_2,x_{1,1},x_{1,2},x_{2,1},x_{2,2},x_{3,1},x_{3,2}\}$.

  \noindent Finally, we observe that $2Q\sim 2(2C_1)=4C_1$ and similarly, we have $C\sim 3C_1$ and $E\sim C_0+4F$, so $E+C\sim 4C_1$. Then we have found a smooth curve $\mathcal{L}\in |4C_1|$ tangent to $F$ in $x_1$ and $x_2$ and tangent to $F_i$ in $x_{i,1}$ and $x_{i,2}$, for $i=1,2,3$. \end{proof}

  \par\bigskip In the following figure, we analyze what happens blowing up all the intersection points between $\mathcal{L}$ and $W''$, also infinitely near. We observe that, since $\mathcal{L}\sim 4C_1$ and $4C_1$ is disjoint from $C_0$, then $\mathcal{L}$ does not intersect $C_0$. With abuse of notation, we will call the strict transforms of $C_0$, $F_i$ and $\mathcal{L}$ with the same names.

 \noindent In the figure, we only focus on $F_1$, it is the same for $F_2,F_3$ and $F$.
 \bigskip

  \begin{itemize}
    \item{STEP 1} We blow up the intersection points $x_{i,1}$ and $x_{i,2}$ on $X''$;
   \bigskip \item{STEP 2} In $X''_1:=Bl_{x_{1,1},x_{1,2},x_{2,1},x_{2,2},x_{3,1},x_{3,2}}(X'')$, the curve $\mathcal{L}$ simply passes through the infinitely near base points $y_{i,1}$ and $y_{i,2}$, for $i=1,2,3$. We also blow up these six points;
   \bigskip \item{STEP 3} Again $\mathcal{L}$ intersects the exceptional divisors $E_{i,1}$ and $E_{i,2}$ respectively in $z_{i,1}$ and $z_{i,2}$ on $X''_2:=Bl_{y_{1,1},y_{1,2},y_{2,1},y_{2,2},y_{3,1},y_{3,2}}(X''_1)$, for $i=1,2,3$. We obtain $Y=Bl_{z_{1,1},z_{1,2},z_{2,1},z_{2,2},z_{3,1},z_{3,2}}(X''_2)$ blowing up these other six points.
  \end{itemize}

 \par\bigskip
  \begin{figure}[h]
\centering
\includegraphics [scale=0.6]{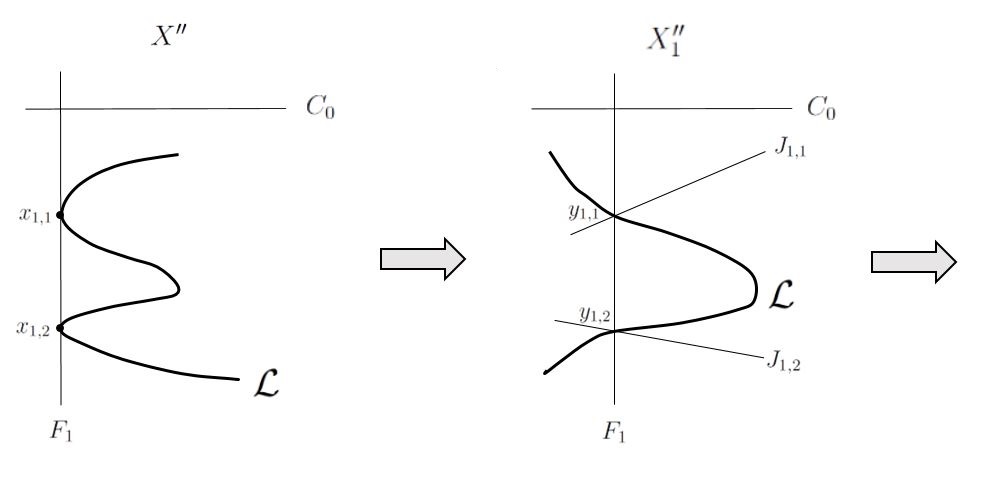}
\end{figure}

\begin{figure}[h]
\includegraphics [scale=0.6]{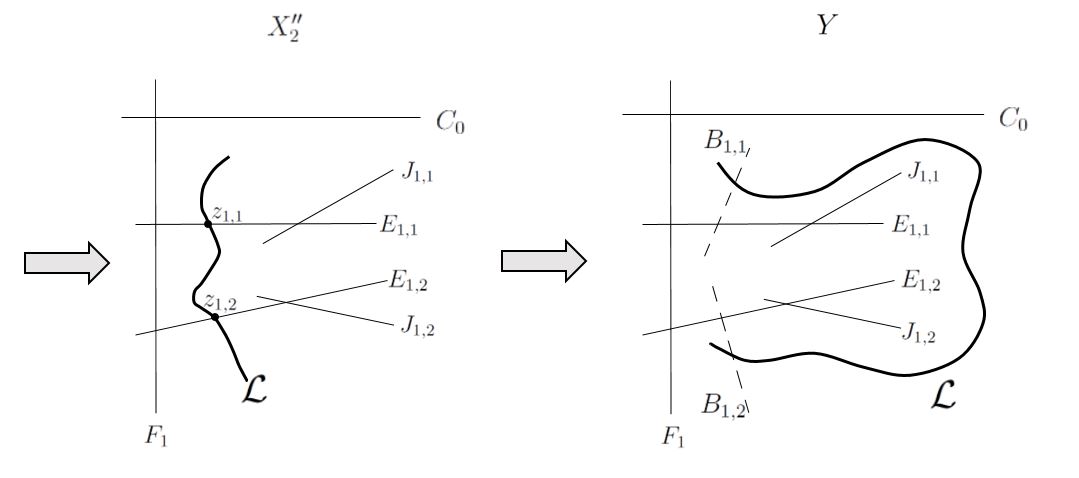}
\label{fig:1}
\end{figure}

\newpage
  \noindent With the same techniques as before, we also blow up $\{x_1,x_2,y_1,y_2,z_1,z_2\}\in \mathcal{L}\cap F$ (as seen in the previous figure, they are infinitely near points). We define $$X':=Bl_{z_1,z_2}(Bl_{y_1,y_2}(Bl_{x_1,x_2}(Y))).$$ In $X'$, there are no intersection points between $\mathcal{L}$ and $-2K_{X'}$.

  \par\bigskip After observing how the blowing up works, we consider $L''\subset|4C_1|$ as the linear system of the curves of $|4C_1|$ simply passing through $$Z:=\{x_1,x_2,y_1,y_2,z_1,z_2,x_{i,1},x_{i,2},y_{i,1},y_{i,2},z_{i,1},z_{i,2}\}, \hspace{0.2cm} for \hspace{0.2cm} i=1,2,3.$$ Then $\dim L''\geq 44-6\cdot 4=20$. It is clear that $\mathcal{L}$ is an element of $L''$ and since smoothness is an open condition, then the general element $C''$ of $L''$ is smooth.

  \par\bigskip\noindent With the same notation as before, we can obtain that $$-K_{Y}=2C_0+\sum_{i=1}^{3}(2F_i+J_{i,1}+J_{i,2}+2E_{i,1}+2E_{i,2}+B_{i,1}+B_{i,2})$$ while $$W''_Y=4C_0+3F+\sum_{i=1}^{3}(3F_i+J_{i,1}+J_{i,2}+2E_{i,1}+2E_{i,2})\in |-2K_Y|.$$

  \par\bigskip\noindent At this point, we use the fact that, if $M$ is an effective divisor and $N_i$ are irreducible divisors, if $M\cdot N_1<0$, $(M-N_1)\cdot N_2<0$, and so on, the $\sum N_i$ is a partial fixed part of $|M|$. Thus, inductively, one can verify that all $2C_0+\sum_{i=1}^{3}(2F_i+J_{i,1}+J_{i,2}+2E_{i,1}+2E_{i,2}+B_{i,1}+B_{i,2})$ is a fixed component of $|-K_{Y}|$, so this is the only one effective curve in its linear system. In addition, the part $4C_0+\sum_{i=1}^{3}(3F_i+J_{i,1}+J_{i,2}+2E_{i,1}+2E_{i,2})$ is the fixed part of $|-2K_{Y}|$ while its variable part is $3F$.

  \par\bigskip\noindent Similarly to before, we can compute that
   $$-K_{X'}\sim2C_0+\sum_{i=1}^{3}(2F_i+J_{i,1}+J_{i,2}+2E_{i,1}+2E_{i,2}+B_{i,1}+B_{i,2})+$$$$-J_1-J_2-2E_1-2E_2-3B_1-3B_2.$$

\bigskip\noindent This is clearly not effective. Instead $$W'=4C_0+3F+J_1+J_2+2E_1+2E_2+\sum_{i=1}^{3}(3F_i+J_{i,1}+J_{i,2}+2E_{i,1}+2E_{i,2})\in |-2K_{X'}|$$ is effective. In addition, all $4C_0+3F+J_1+J_2+2E_1+2E_2+\sum_{i=1}^{3}3F_i+J_{i,1}+J_{i,2}+2E_{i,1}+2E_{i,2}$ is a fixed component of $|-2K_{X'}|$ and consequently it is the only one effective divisor in its linear system.

 \par\bigskip\noindent Since all the divisors on $\Gamma$ of the same degree are linearly equivalent, then we observe that $-4D\cdot f\sim 4F+4\sum_{i=1}^{3}F_i$, so we can assume that $$C''\sim4C_0+4F+4\sum_{i=1}^{3}F_i,$$ where $C''$ is a general element in $L''$. Then, its strict transform $C'$ on $X'$ is linearly equivalent to $$C'\sim 4C_0+4F+\sum_{i=1}^{2}(3J_j+6E_j+5B_j)+$$$$+\sum_{i=1}^{3}(4F_i+3J_{i,1}+3J_{i,2}+6E_{i,1}+6E_{i,2}+5B_{i,1}+5B_{i,2}).$$ If $\phi:X'\rightarrow X''$ is the blowing up of $X''$ along the points of $Z$, let $L'$ be such that $L''=\phi_*L'$, with $C'$ a general element.

\par\bigskip Step by step, we can prove that a general hyperplane section $C'$ of $X'$ is a Prym-canonical embedded curve.

\par\bigskip\noindent $\mathrm{CLAIM \hspace{0.2cm}2:}$ We have that $\mathcal{O}_{C'}(-K_{X'})\ncong \mathcal{O}_{C'}$ while $\mathcal{O}_{C'}(-2K_{X'})\cong\mathcal{O}_{C'}$. In particular, $-K_{X'}|_{C'}$ is a non-zero two torsion divisor.

\begin{proof} It is easy to compute that $C'\cdot W'=0$. Since $W'$ is effective, then it is contracted by the map defined by $L'$, in particular $\mathcal{O}_{C'}(-2K_{X'})\cong \mathcal{O}_{C'}$.

\noindent It is clear that also $C'\cdot (-K_{X'})=0$ but this time we have that $h^0(\mathcal{O}_{X'}(-K_{X'}))=0$. As seen in Claim $2$ of Proposition \ref{example}, it is sufficient to show that \linebreak$h^1(\mathcal{O}_{X'}(-K_{X'}-C'))=0$ to prove that $\mathcal{O}_{C'}(-K_{X'})\ncong \mathcal{O}_{C'}$.

\noindent Using Serre Duality and the Kawamata-Viehweg vanishing Theorem (see \cite{K} and \cite{V}), if we prove that $K_{X'}+C'$ is big and nef, then the claim is satisfied.
\noindent Now $$K_{X'}+C'=2C_0+\sum_{i=1}^{3}(2F_i+2J_{i,1}+2J_{i,2}+4E_{i,1}+4E_{i,2}+4B_{i,1}+4B_{i,2})+$$$$+4F+4J_1+4J_2+8E_1+8E_2+8B_1+8B_2.$$

\noindent Since $K_{X'}+C'$ is written as sum of irreducible and effective curves, then, to prove that $K_{X'}+C'$ is nef, it is enough to prove that $(K_{X'}+C')\cdot \delta\geq0$, for any its irreducible component $\delta$. It is possible to compute that this is true. Because we have strictly positive intersections between $K_{X'}+C'$ and its components, then $K_{X'}+C'$ is also big. So the claim is satisfied.\end{proof}

\par\bigskip\noindent It is not difficult to compute that $C'^2=40$, so, by the adjunction formula, the genus $g(C')=1+\frac{1}{2}(C'^2)=21$. We also observe that $C'$ is smooth because it is the strict transform of a general element $C''$ of $L''$, that is smooth. Since $-K_{X'}|_{C'}$ is a non-zero two torsion divisor as seen in Claim $2$, we have that $L'|_{C'}= |K_{C'}-K_{X'}|_{C'}|$ defines a Prym-canonical map $$\phi_{L'|_{C'}}:C'\dashrightarrow \mathbb{P}^{19}.$$

\par\bigskip\noindent We have already observed that $\dim(L'')=\dim(L')\geq20$. From the exact sequence $$0\rightarrow \mathcal{O}_{X'}(C'-C')\rightarrow \mathcal{O}_{X'}(C')\rightarrow \mathcal{O}_{C'}(C')\rightarrow 0,$$ we conclude that $h^0(\mathcal{O}_{X'}(C'))\leq20$ since $\mathcal{O}_{X'}(C'-C')\cong \mathcal{O}_{X'}$ and $h^0(\mathcal{O}_{C'}(C'))=19$. So we have that $h^0(\mathcal{O}_{X'}(C'))=21$ and $\phi_{L'}(X')\subseteq\mathbb{P}^{20}.$

\par\bigskip\noindent $\mathrm{CLAIM \hspace{0.2cm}3:}$ The rational map $\phi_{L'|_{C'}}:C'\dashrightarrow \mathbb{P}^{19}$ is an embedding, for any general curve $C'\in L'$.

\begin{proof}
  First of all, we know that, if the Prym-canonical system $L'|_{C'}$ has base points, then $C'$ is hyperelliptic by \cite{CDGK}, Lemma $2.1$.

\noindent Let us suppose that $C'$ is hyperelliptic. Since we know that $C'\in X'$ is isomorphic to $C''\sim 4C_1$ on $X''$, then $C''$ is also hyperelliptic.

\noindent The self-intersection $C''^2=(4C_1)^2=64$. Furthermore $C''$ is nef since $C''\sim 4C_0+4F+4\sum_{i=1}^{3}F_i$ and $(4C_0+4F+4\sum_{i=1}^{3}F_i)\cdot C_0=0$ and $(4C_0+4F+4\sum_{i=1}^{3}F_i)\cdot F=(4C_0+4F+4\sum_{i=1}^{3}F_i)\cdot F_i=4$.
Moreover, by \cite{H}, Proposition $IV.5.2$, we have that $|K_{C''}|$ is not very ample, precisely it does not separate any pair of points $p$ and $q$ such that $p+q$ is a member of the $g^1_2$ on $C''$. By the adjunction formula, we also have that $|K _{X''} + C''|$ does not
separate such $p$ and $q$.


\noindent By \cite{R}, Theorem $1.$, there exists an effective divisor $E$ on $X''$ passing through $p$ and $q$ such that $C''\cdot E<4$. Since $C''\sim 4C_1$, then $C''\cdot E=0$. This is not possible and thus $E$ cannot exist. We exclude the case $C''$ hyperelliptic and hence $L'|_{C'}$ is base-point free.

  \par\bigskip\noindent Furthermore, we can prove that $L'|_{C'}$ defines a birational map. Indeed, if this did not happen, we would have $C'$ bielliptic and the image of $X'$ via the map associated with $L'$ would be a surface in $\mathbb{P}^{20}$ with elliptic sections (see \cite{CDGK}, Corollary $2.2$). Since $20>9$, then the surface image in $\mathbb{P}^{20}$ could not be a Del Pezzo surface but it would be an elliptic cone. Anyway $X'$ is a rational surface, so it cannot cover an elliptic cone. Then $L'|_{C'}$ defines a birational map.

 \par\bigskip\noindent More precisely, we can also show that $L'|_{C'}$ defines an embedding, for any general $C'\in L'$. By \cite{CDGK}, Lemma $2.1$, we know that $L'|_{C'}$ does not separate $p$ and $q$ (possibly infinitely near) if and only if $C'$ has a $g^1_4$ and $-K_{X'}|_{C'}\sim \mathcal{O}_{C'}(p+q-x-y)$, where $2(p+q)$ and $2(x+y)$ are members of the $g^1_4$.

 \par\bigskip\noindent We know that $C'\cong C''$ and $C''\sim 4C_1$ has a $g^1_4$ defined by the fibres of the ruled surface $X''$. This is the only one. Indeed, if $C'$ had two $g^1_4$, then there would be a map $\psi:C'\rightarrow \mathbb{P}^1\times\mathbb{P}^1$. If $\psi$ was a birational map, the image curve would be of the type $(4,4)$ on $\mathbb{P}^1\times\mathbb{P}^1$. Then its geometric genus would be at most $(4-1)(4-1)=9$. Since $C'$ has genus $21$, this case is excluded. Thus $\psi$ would be a map $2:1$ on a curve $D$. The image curve $D$ would be a curve of type $(2,2)$ on $\mathbb{P}^1\times\mathbb{P}^1$, so its geometric genus would be $g(D)\leq1$. Since $C'$ is non-hyperelliptic as seen before, then $g(D)=1$ and $C'$ is bielliptic. Then $C'$ admits a singular correspondence. By Corollary $2.2$ of \cite{Cil-VdV}, the map determined by the linear system $|C'|$ is not birational, in particular it is $2:1$ on a surface with elliptic sections. We have already excluded this possibility, so $C''$ has only one $g^1_4$.

 \par\bigskip\noindent It is clear that $C''\cap F\sim C''\cap F_1\sim C''\cap F_2\sim C''\cap F_3$ and we know that $C''$ is tangent in two points to this four fibres. So we have four pairs of points $(p,q)\in F$, $(x,y)\in F_1$, $(z,w)\in F_2$ and $(a,b)\in F_3$ such that $2(p+q)\sim 2(x+y)$ and so on for all the possible cases. Since $C'$ is the strict transform of $C''$, it has the same characteristics of $C''$ and, after the blowing up, the four pairs of points that satisfy this property are the intersection points between $C'$ and $B_{i,j}$ and $C'$ and $B_j$, for $i=1,2,3$ and $j=1,2$. Now, with abuse of notation and using the expression of $-K_{X'}$ seen before, we have that $$-K_{X'}|_{C'}=\sum_{i=1}^{3}(B_{i,1}+B_{i,2})|_{C'}-(3B_1+3B_2)|_{C'}=$$$$=x+y+w+z+a+b-3p-3q\sim x+y-w-z+a+b-p-q.$$

 \bigskip\noindent At this point, we observe that $$x+y-w-z+a+b-p-q\nsim x+y-p-q$$ otherwise, if $a+b-w-z\sim 0,$ then $C'$ would have a $g^1_2$. Hence $L'|_{C'}$ separate each pair of points and it defines an embedding.
\end{proof}

\par\bigskip\noindent At this point, since $L'|_{C'}$ is base-point free, it is clear that $L'$ is also base-point free.

\noindent Since the restriction $L'|_{C'}$ defines an embedding for each generic curve $C'\in L'$, then $\phi_{L'}$ is a birational map, generically $1:1$.

 \noindent Then $X'$ has hyperplane sections that are Prym-canonically embedded. In particular, $\phi_{L'}(X')$ is a surface with Prym-canonical hyperplane sections.

\par\bigskip\noindent We have found a new surface $X=\phi_{L'}(X')\subset \mathbb{P}^{20}$ with Prym-canonical hyperplane sections. Since $W'$ is connected, then the image $x\in X$ of $W'$ is a rational singular point (see Proposition \ref{sum sing}). There are other possible rational double singularities on $X$ whose exceptional divisors on $X'$ do not intersect $-2K_{X'}$.
\end{exa}

\subsection{More surfaces with Prym-canonical hyperplane sections birationally equivalent to $\mathbb P^2$}
\par\bigskip\noindent We construct a new example of such surface whose minimal model is $X''=\mathbb{P}^2$.

\begin{exa}\label{Case 2}
  Let $X''=\mathbb{P}^2$ be such that $-2K_{X''}$ is an irreducible sextic with $10$ nodes $\{x_1,...,x_{10}\}$. Let $L''$ be a linear system of curves of degree $18$ with base points $\{x_1,...,x_{10}\}\in X''$ of multiplicity respectively $r_i=4$, for $i=1,2,3$, and $r_i=6$, for $i=4,...,10$. Let $X'=Bl_{\{x_1,...,x_{10}\}}(\mathbb{P}^2)$ be the blowing up of $X''$ along the base points of $L''$ and let $L'$ be the strict transform of $L''$. We observe that the anticanonical divisor $-K_{X'}$ is not effective.

 \noindent Let $$C'\sim 18 l-4\sum_{i=1}^{3}E_i-6\sum_{i=4}^{10}E_i$$ be a general curve in $L'$, where $E_i$ is the exceptional divisor associated with $x_i$, for $i=1,...,10$. It is obvious that $$\deg(C'|_{C'})=18^2-3\cdot 16-7\cdot36=24.$$ We have that $h^0(\mathcal{O}_{X'}(C'))\geq\binom{20}{2}-4\frac{4\cdot5}{2}-7\frac{6\cdot 7}{2}=13$, so $\phi_{L'}(X')=X\subset \mathbb{P}^r$, for $r\geq12$.

 \par\bigskip\noindent Since $-2K_{X'}\sim J=6l-2\sum_{i=1}^{10}E_i$ is effective and $C'\cdot (-2K_{X'})=0$ by construction, then $\mathcal{O}_{C'}(-2K_{X'})\cong \mathcal{O}_{C'}$. So $L'$ contracts $J$ in a single point since $J$ is irreducible. Moreover, since $J$ is also rational, then $\phi_{L'}(J)$ is a rational singularity of multiplicity $4$ because the fundamental cycle $Z_0=J$ is such that $Z_0^2=J^2=-4$.

 \par\bigskip\noindent $\mathrm{CLAIM \hspace{0.2cm}1:}$ The dimension $\dim(L')=12$, so $\phi_{L'}(X')=X\subseteq\mathbb{P}^{12}$.

 \begin{proof} We can consider the following exact sequence, already tensored with $\mathcal{O}_{X'}(C')$: \begin{equation}\label{P2 1}
                                                   0\rightarrow\mathcal{O}_{X'}(C'-J)\rightarrow\mathcal{O}_{X'}(C')\rightarrow\mathcal{O}_{J}(C')\rightarrow0.
                                                 \end{equation}

 \noindent Since $\mathcal{O}_{C'}(-2K_{X'})\cong \mathcal{O}_{C'}$ and $J\in |-2K_{X'}|$, then $\mathcal{O}_{J}(C')\cong \mathcal{O}_{J}\cong \mathcal{O}_{\mathbb{P}^1}$ since $J$ is rational. Thus we can rewrite (\ref{P2 1}) as \begin{equation}\label{P2 2}
                                                   0\rightarrow\mathcal{O}_{X'}(12l-2\sum_{i=1}^{3}E_i-4\sum_{i=4}^{10}E_i)\rightarrow\mathcal{O}_{X'}(18l-4\sum_{i=1}^{3}E_i-6\sum_{i=4}^{10}E_i)\rightarrow\mathcal{O}_{\mathbb{P}^1}\rightarrow0.
                                                 \end{equation}

\noindent Similarly we obtain that

$$0\rightarrow\mathcal{O}_{X'}(6l-2\sum_{i=4}^{10}E_i)\rightarrow\mathcal{O}_{X'}(12l-2\sum_{i=1}^{3}E_i-4\sum_{i=4}^{10}E_i)\rightarrow$$
\begin{equation}\label{P2 3}
 \rightarrow\mathcal{O}_{J}(12l-2\sum_{i=1}^{3}E_i-4\sum_{i=4}^{10}E_i)\rightarrow0.
                                                              \end{equation}

\bigskip\noindent It is possible to choose a quintuple of points among the $10$ nodes $\{x_1,...,x_{10}\}$ of $J$ such that three of these points are not aligned, then an irreducible conic passing through this quintuple of points exists. Up to renaming the nodes of $J$, we suppose that a conic passing through $\{x_4,...,x_8\}$ exists.

\noindent  So let us consider the following exact sequences:

\begin{equation}\label{P2 3bis}
 0\rightarrow\mathcal{O}_{X'}(4l-\sum_{i=4}^{8}E_i-2\sum_{i=9}^{10}E_i)\rightarrow\mathcal{O}_{X'}(6l-2\sum_{i=4}^{10}E_i)\rightarrow\mathcal{O}_{2l-\sum_{i=4}^{8}E_i}(6l-2\sum_{i=4}^{8}E_i)\rightarrow0;
                                                              \end{equation}

$$0\rightarrow\mathcal{O}_{X'}(3l-\sum_{i=4}^{10}E_i)\rightarrow\mathcal{O}_{X'}(4l-\sum_{i=4}^{8}E_i-2\sum_{i=9}^{10}E_i)\rightarrow$$
\begin{equation}\label{P2 3bisbis}
 \rightarrow\mathcal{O}_{l-E_9-E_{10}}(4l-\sum_{i=4}^{8}E_i-2\sum_{i=9}^{10}E_i)\rightarrow0.
                                                              \end{equation}

 \bigskip\noindent It obvious that $h^0(\mathcal{O}_{X'}(3l-\sum_{i=4}^{10}E_i))=\binom{5}{2}-7=3$. Since it is an effective divisor on $X'$ and it has the excepted dimension, then $h^1(\mathcal{O}_{X'}(3l-\sum_{i=4}^{10}E_i))=0$.

 \noindent Because $l-E_9-E_{10}$ is rational and $(l-E_9-E_{10})\cdot(4l-\sum_{i=4}^{8}E_i-2\sum_{i=9}^{10}E_i)=0$, then $h^0(\mathcal{O}_{l-E_9-E_{10}}(4l-\sum_{i=4}^{8}E_i-2\sum_{i=9}^{10}E_i))=1$ and $h^1(\mathcal{O}_{l-E_9-E_{10}}(4l-\sum_{i=4}^{8}E_i-2\sum_{i=9}^{10}E_i))=0$.

 \noindent From the exact sequence (\ref{P2 3bisbis}), we conclude that $h^0(\mathcal{O}_{X'}(4l-\sum_{i=4}^{8}E_i-2\sum_{i=9}^{10}E_i))=3+1=4$ and $h^1(\mathcal{O}_{X'}(4l-\sum_{i=4}^{8}E_i-2\sum_{i=9}^{10}E_i))=0$.

\par\bigskip\noindent Using the Riemann-Roch Theorem, since $J$ and $2l-\sum_{i=4}^{8}E_i$ are rational, we obtain that $$h^0(\mathcal{O}_{J}(12l-2\sum_{i=1}^{3}E_i-4\sum_{i=4}^{10}E_i))=4+1=5$$ and $$h^0(\mathcal{O}_{2l-\sum_{i=4}^{8}E_i}(6l-2\sum_{i=4}^{10}E_i))=2+1=3.$$

\par\bigskip\noindent  From the exact sequence (\ref{P2 3bis}), we conclude that $h^0(\mathcal{O}_{X'}(6l-2\sum_{i=4}^{10}E_i))=7$ and $h^1(\mathcal{O}_{X'}(6l-2\sum_{i=4}^{10}E_i))=0$. Again, from the exact sequence (\ref{P2 3}), we have that $h^0(\mathcal{O}_{X'}(12l-2\sum_{i=1}^{3}E_i-4\sum_{i=4}^{10}E_i))=12$ and $h^1(\mathcal{O}_{X'}(12l-2\sum_{i=1}^{3}E_i-4\sum_{i=4}^{10}E_i))=0$.
\noindent Finally, from the exact sequence (\ref{P2 2}), we obtain that $$h^0(\mathcal{O}_{X'}(18l-4\sum_{i=1}^{3}E_i-6\sum_{i=4}^{10}
E_i))=12+1=13.$$ Then the claim is proved. \end{proof}

\par\bigskip Step by step we want to show that a general hyperplane section of $X'$ is a Prym-canonical embedded curve.
 \par\bigskip\noindent $\mathrm{CLAIM \hspace{0.2cm}2:}$ We prove that $\mathcal{O}_{C'}(-K_{X'})\ncong \mathcal{O}_{C'}$.

 \begin{proof} If we show that $h^1(\mathcal{O}_{X'}(-K_{X'}-C'))=0$, then, using the long exact sequence associated with $$0\rightarrow \mathcal{O}_{X'}(-K_{X'}-C')\rightarrow \mathcal{O}_{X'}(-K_{X'})\rightarrow \mathcal{O}_{C'}(-K_{X'})\rightarrow 0$$ and observing that $-K_{X'}$ is not effective, we have that $h^0(\mathcal{O}_{C'}(-K_{X'}))=0$, thus $\mathcal{O}_{C'}(-K_{X'})\ncong \mathcal{O}_{C'}$.

\noindent Since $$-K_{X'}-C'\sim -15l+3\sum_{i=1}^{3}E_i+5\sum_{i=4}^{10}E_i,$$ then it is not effective and $h^0(\mathcal{O}_{X'}(-K_{X'}-C'))=0$. By Serre Duality, we have that $h^2(\mathcal{O}_{X'}(-K_{X'}-C'))=h^0(\mathcal{O}_{X'}(2K_{X'}+C'))=h^0(\mathcal{O}_{X'}(12l-2\sum_{i=1}^{3}E_i-4\sum_{i=4}^{10}E_i))=12$
as proved in Claim $1$.

\noindent Using the Riemann-Roch Theorem, we conclude that $-h^1(\mathcal{O}_{X'}(-K_{X'}-C'))+\linebreak +h^2(\mathcal{O}_{X'}(-K_{X'}-C'))=-h^1(\mathcal{O}_{X'}(-K_{X'}-C'))+12=\frac{1}{2}(-15l+3\sum_{i=1}^{3}E_i+5\sum_{i=4}^{10}E_i) \cdot(-12l+2\sum_{i=1}^{3}E_i+4\sum_{i=4}^{10}E_i)+1-0=12$, so $h^1(\mathcal{O}_{X'}(-K_{X'}-C'))=0$ and the claim is proved.\end{proof}

\par\bigskip\noindent $\mathrm{CLAIM \hspace{0.2cm}3:}$ There are irreducible curves of degree $18$ with exactly $3$ quadruple points and $7$ points of multiplicity six in the ten nodes of $J$.

\begin{proof}
We observe that curves of the type $J+D$, with $D\in|12l-2\sum_{i=1}^{3}E_i-4\sum_{i=4}^{10}E_i|$ and $J$ fixed part, are contained in $L'=|18l-4\sum_{i=1}^{3}E_i-6\sum_{i=4}^{10}E_i|$.

\noindent As proved in Claim $1$, we have that $\dim|18l-4\sum_{i=1}^{3}E_i-6\sum_{i=4}^{10}E_i|=12$ while $\dim |12l-2\sum_{i=1}^{3}E_i-4\sum_{i=4}^{10}E_i|=11$, so the reducible curves $J+D$ do not fill up all the linear system of the curves of degree $18$. As consequence of Bertini's Theorem (see \cite{Ak}, pag. $1$), the generic curve of $L'$ is irreducible (indeed the curves of the linear system $L'$ with fixed part $J$ define a sublinear system and moreover the sublinear system is not composed by a pencil, even more so the linear system $L'$).

\noindent Also curves of the type $2J+F$, with $F\in |6l-2\sum_{i=4}^{10}E_i|$ and $2J$ fixed part, are contained in $L'$. Since these special curves of $L'$ have exactly quadruple points in three of the $10$ nodes of $J$ and points of multiplicity $6$ in seven of the $10$ nodes of $J$, then the generic curves of the linear system $L'$ have the same property. Thus irreducible curves of degree $18$ with exactly quadruple points in three of the $10$ nodes of $J$ and points of multiplicity $6$ in the remaining nodes of $J$ exist. \end{proof}

\par\bigskip\noindent Therefore the arithmetic genus, that is equal to the geometric genus of $C$, is $$g(C')=\frac{17\cdot 16}{2}-3\frac{4\cdot 3}{2}-7\frac{6\cdot 5}{2}=13$$ by the Pl\"{u}cker Formula.

\par\bigskip\noindent It remains to show that $L'$ defines an embedding outside the contracted curve $J$.

\par\bigskip\noindent $\mathrm{CLAIM \hspace{0.2cm}4:}$ The linear system $L'$ is base-point free.

\begin{proof}
 Let $\overline{X'}=Bl_{x_{11}}(X')$, where $x_{11}$ is a point of a general $C'\in L'$. If $\overline{L'}=|18l-4\sum_{i=1}^{3}E_i-6\sum_{i=4}^{10}E_i-E_{11}|$, for $E_{11}$ the exceptional divisor associated with $x_{11}$, then $L'$ is base point free if and only if $$\dim(
 \overline{L'})=\dim(L')-1,$$ for any point $x_{11}\in C'$, for a general $C'\in L'$.

\par\bigskip\noindent We have already proved that $\dim(L')=12$, instead $\dim(\overline{L'})\geq\binom{20}{2}-3\frac{4\cdot 5}{2}-7\frac{6\cdot 7}{2}-1-1=11$. We observe that, since $x_{11}\in C'$ and $C'$ and $J$ are disjoint by assumptions, then $\mathcal{O}_{\overline{J}}(\overline{C'})\cong \mathcal{O}_{\mathbb{P}^1}$, where $\overline{C'}$ is a general curve in $\overline{L'}$ and $\overline{J}$ is the strict transform of $J$ on $\overline{X'}$.

 \noindent Similarly to the exact sequences (\ref{P2 2}), (\ref{P2 3}), we have the following:
 \begin{equation}\label{P2 4}
   0\rightarrow\mathcal{O}_{\overline{X'}}(12l-2\sum_{i=1}^{3}E_i-4\sum_{i=4}^{10}E_i-E_{11})\rightarrow\mathcal{O}_{\overline{X'}}(18l-4\sum_{i=1}^{3}E_i-6\sum_{i=4}^{10}E_i-E_{11})\rightarrow\mathcal{O}_{\mathbb{P}^1}\rightarrow0
 \end{equation}
$$
 0\rightarrow\mathcal{O}_{\overline{X'}}(6l-2\sum_{i=4}^{10}E_i-E_{11})\rightarrow\mathcal{O}_{\overline{X'}}(12l-2\sum_{i=1}^{3}E_i-4\sum_{i=4}^{10}E_i-E_{11})\rightarrow$$
\begin{equation}\label{P2 5} \rightarrow\mathcal{O}_{\overline{J}}(12l-2\sum_{i=1}^{3}E_i-4\sum_{i=4}^{10}E_i-E_{11})\rightarrow0.
                                                              \end{equation}

\noindent As in Claim $1$, we suppose that an irreducible conic passing through $\{x_4,...,x_8\}$ exists.
\begin{itemize}

\item If $x_{11}\in 2l-\sum_{i=4}^{8}E_i$, we can consider the exact sequence
$$0\rightarrow \mathcal{O}_{\overline{X'}}(4l-\sum_{i=4}^{8}E_i-2\sum_{i=9}^{10}E_i)\rightarrow\mathcal{O}_{\overline{X'}}(6l-2\sum_{i=4}^{10}E_i-E_{11})\rightarrow$$
\begin{equation}\label{P2 6}
  \rightarrow \mathcal{O}_{2l-\sum_{i=4}^{8}E_i-E_{11}}(6l-2\sum_{i=4}^{10}E_i-E_{11})\rightarrow 0.
\end{equation}

\noindent From the exact sequence (\ref{P2 3bisbis}), we know that $h^0(\mathcal{O}_{\overline{X'}}(4l-\sum_{i=4}^{8}E_i-2\sum_{i=9}^{10}E_i))=4$ and $h^1(\mathcal{O}_{\overline{X'}}(4l-\sum_{i=4}^{8}E_i-2\sum_{i=9}^{10}E_i))=0$.

\noindent Since $h^0(\mathcal{O}_{2l-\sum_{i=4}^{8}E_i-E_{11}}(6l-2\sum_{i=4}^{10}E_i-E_{11}))=2$ by Riemann-Roch's Theorem, then $h^0(\mathcal{O}_{\overline{X'}}(6l-2\sum_{i=4}^{10}E_i-E_{11}))=6$ and $h^1(\mathcal{O}_{\overline{X'}}(6l-2\sum_{i=4}^{10}E_i-E_{11}))=0$ from the exact sequence (\ref{P2 6}).

\item If $x_{11}\notin 2l-\sum_{i=4}^{8}E_i$, then we consider the following

\bigskip
$$0\rightarrow\mathcal{O}_{\overline{X'}}(4l-\sum_{i=4}^{8}E_i-2E_9-2E_{10}-E_{11})\rightarrow\mathcal{O}_{\overline{X'}}(6l-2\sum_{i=4}^{10}E_i-E_{11})\rightarrow
$$
\begin{equation}\label{P2 7}
 \rightarrow\mathcal{O}_{2l-\sum_{i=4}^{8}E_i}(6l-2\sum_{i=4}^{10}E_i-E_{11})\rightarrow0.
                                                              \end{equation}

\begin{itemize}
 \item[$\blacklozenge$] If $x_{11}\in l-E_9-E_{10}$, we consider
\bigskip
 $$
 0\rightarrow \mathcal{O}_{\overline{X'}}(3l-\sum_{i=4}^{10}E_i)\rightarrow\mathcal{O}_{\overline{X'}}(4l-\sum_{i=4}^{8}E_i-2\sum_{i=9}^{10}E_i-E_{11})\rightarrow$$
\begin{equation}\label{P2 8}
  \rightarrow\mathcal{O}_{l-E_{9}-E_{10}-E_{11}}(4l-\sum_{i=4}^{8}E_i-2\sum_{i=9}^{10}E_i-E_{11})\rightarrow0.
\end{equation}

 It is obvious that $h^0(\mathcal{O}_{\overline{X'}}(3l-\sum_{i=4}^{10}E_i))=3$ and $h^0(\mathcal{O}_{l-E_{9}-E_{10}-E_{11}}(4l-\sum_{i=4}^{8}E_i-2\sum_{i=9}^{10}E_i-E_{11}))=0$. From the exact sequence (\ref{P2 8}), we obtain that $h^0(\mathcal{O}_{\overline{X'}}(4l-\sum_{i=4}^{8}E_i-2\sum_{i=9}^{10}E_i-E_{11}))=3$.

 \item[$\blacklozenge$] If $x_{11}\notin l-E_{9}-E_{10}$, we can consider the following exact sequences

\bigskip
$$0\rightarrow\mathcal{O}_{\overline{X'}}(3l-\sum_{i=4}^{11}E_i)\rightarrow\mathcal{O}_{\overline{X'}}(4l-\sum_{i=4}^{8}E_i-2\sum_{i=9}^{10}E_i-E_{11})\rightarrow$$
\begin{equation}\label{P2 9}
 \rightarrow\mathcal{O}_{l-E_9-E_{10}}(4l-\sum_{i=4}^{8}E_i-2\sum_{i=9}^{10}E_i-E_{11})\rightarrow0;
                                                              \end{equation}

\bigskip
$$0\rightarrow\mathcal{O}_{\overline{X'}}(l-\sum_{i=9}^{11}E_i)\rightarrow\mathcal{O}_{\overline{X'}}(3l-\sum_{i=4}^{11}E_i)\rightarrow$$

\begin{equation}\label{P2 10}
\rightarrow\mathcal{O}_{2l-\sum_{i=4}^{8}E_i}(3l-\sum_{i=4}^{11}E_i)\rightarrow0.
                                                              \end{equation}

 \noindent By assumption we have that $h^0(\mathcal{O}_{\overline{X'}}(l-\sum_{i=9}^{11}
 E_i))=0$. Because \\\ $h^0(\mathcal{O}_{2l-\sum_{i=4}^{8}E_i}(3l-\sum_{i=4}^{11}E_i))=2$, then, from the exact sequence (\ref{P2 10}) we conclude that $h^0(\mathcal{O}_{\overline{X'}}(3l-\sum_{i=4}^{11}E_i))\leq2$. Since $h^0(\mathcal{O}_{\overline{X'}}(3l-\sum_{i=4}^{11}E_i))\geq\binom{5}{2}-8=2$, then equality holds.

 \par\bigskip\noindent Moreover $h^0(\mathcal{O}_{\overline{X'}}(4l-\sum_{i=4}^{8}E_i-2\sum_{i=9}^{10}E_i-E_{11}))\geq\binom{6}{2}-6-3-3=3$. Since $h^0(\mathcal{O}_{l-E_9-E_{10}}(4l-\sum_{i=4}^{8}E_i-2\sum_{i=9}^{10}E_i-E_{11}))=1$, then, from the exact sequence (\ref{P2 9}), we have that $h^0(\mathcal{O}_{\overline{X'}}(4l-\sum_{i=4}^{8}E_i-2\sum_{i=9}^{10}E_i-E_{11}))\leq3$, so equality holds.
 \end{itemize}

\par\bigskip\noindent In both previous cases, we have found $h^0(\mathcal{O}_{\overline{X'}}(4l-\sum_{i=4}^{8}E_i-2\sum_{i=9}^{10}E_i-E_{11}))=3$. Since it is the expected dimension, then $h^1(\mathcal{O}_{\overline{X'}}(4l-\sum_{i=4}^{8}E_i-2\sum_{i=9}^{10}E_i-E_{11}))=0$.

\noindent Using the Riemann-Roch Theorem, we have that \linebreak$h^0(\mathcal{O}_{2l-\sum_{i=4}^{8}E_i}(6l-2\sum_{i=4}^{10}E_i-E_{11}))=3$. So we obtain that $h^0(\mathcal{O}_{\overline{X'}}(6l-2\sum_{i=4}^{10}E_i-E_{11}))=6$ and $h^1(\mathcal{O}_{\overline{X'}}(6l-2\sum_{i=4}^{10}E_i-E_{11}))=0$ from the exact sequence (\ref{P2 7}).

\end{itemize}

\par\bigskip\noindent In all two cases we have that $h^0(\mathcal{O}_{\overline{X'}}(6l-2\sum_{i=4}^{10}E_i-E_{11}))=6$.

\par\bigskip\noindent With the same techniques as before, from the exact sequence (\ref{P2 5}), we have that $h^0(\mathcal{O}_{\overline{X'}}(12l-2\sum_{i=1}^{3}E_i-4\sum_{i=4}^{10}E_i-E_{11}))=11$ and $h^1(\mathcal{O}_{\overline{X'}}(12l-2\sum_{i=1}^{3}E_i-4\sum_{i=4}^{10}E_i-E_{11}))=0$ and finally, from the exact sequence (\ref{P2 4}), we obtain that $h^0(\mathcal{O}_{\overline{X'}}(18l-4\sum_{i=1}^{3}E_i-6\sum_{i=4}^{10}E_i-E_{11}))=12$.
\noindent Then we can conclude that $L'$ is base-point free. \end{proof}

\par\bigskip\noindent By Bertini's Theorem, since $L'$ is base-point free, then the generic $C'\in L'$ is smooth.

\par\bigskip\noindent $\mathrm{CLAIM \hspace{0.2cm}5:}$ The linear system $L'$ defines an embedding outside the contracted curve $J$.

\begin{proof} It is sufficient to show that $\dim\overline{L'}=\dim (L')-2$, where either $\overline{L'}=|18l-4\sum_{i=1}^{3}E_i-6\sum_{i=4}^{10}E_i-E_{11}-E_{12}|$, for $E_{11}$ and $E_{12}$ the exceptional divisors associated with any two distinct points $x_{11}$ and $x_{12}$ not belonging to $J$, or $\overline{L'}=|18l-4\sum_{i=1}^{3}E_i-6\sum_{i=4}^{10}E_i-E_{11}-2E_{12}|$, for $E_{11}$ and $E_{12}$ the exceptional divisors associated with any two points $x_{11}$ and $x_{12}$ infinitely near not belonging to $J$.

 \bigskip\noindent We have already proved that $\dim(L')=12$. Moreover we have that $\dim(\overline{L'})\geq\binom{20}{2}-3\frac{4\cdot 5}{2}-7\frac{6\cdot 7}{2}-2-1=10$. If $\overline{C'}$ is a general curve in $\overline{L'}$ and $\overline{J}$ is the strict transform of $J$ on $\overline{X'}$, then $\overline{C'}\cdot \overline{J}=0$ since we choose $x_{11}$ and $x_{12}$ not belonging to $J$.

 \bigskip\noindent We will show that $L'$ defines an embedding outside the contracted curve $J$ assuming $x_{11}$ and $x_{12}$ distinct. The proof is similar for $x_{11}$ and $x_{12}$ infinitely near.

\noindent We can consider the following exact sequences:

 $$ 0\rightarrow\mathcal{O}_{\overline{X'}}(12l-2\sum_{i=1}^{3}E_i-4\sum_{i=4}^{10}E_i-E_{11}-E_{12})\rightarrow$$
 \begin{equation}\label{flu 1}\rightarrow\mathcal{O}_{\overline{X'}}(18l-4\sum_{i=1}^{3}E_i-6\sum_{i=4}^{10}E_i-E_{11}-E_{12})\rightarrow\mathcal{O}_{\mathbb{P}^1}\rightarrow0;
\end{equation}
$$0\rightarrow\mathcal{O}_{\overline{X'}}(6l-2\sum_{i=4}^{10}E_i-E_{11}-E_{12})\rightarrow\mathcal{O}_{\overline{X'}}(12l-2\sum_{i=1}^{3}E_i-4\sum_{i=4}^{10}E_i-E_{11}-E_{12})\rightarrow$$
\begin{equation}\label{flu 2}
 \rightarrow\mathcal{O}_{\overline{J}}(12l-2\sum_{i=1}^{3}E_i-4\sum_{i=4}^{10}E_i-E_{11}-E_{12})\rightarrow0
\end{equation}
$$0\rightarrow\mathcal{O}_{\overline{X'}}(2\sum_{i=1}^{3}E_i-E_{11}-E_{12})\rightarrow\mathcal{O}_{\overline{X'}}(6l-2\sum_{i=4}^{10}E_i-E_{11}-E_{12})\rightarrow$$
\begin{equation}\label{flu 3}
 \rightarrow\mathcal{O}_{\overline{J}}(6l-2\sum_{i=4}^{10}E_i-E_{11}-E_{12})\rightarrow0.
\end{equation}

\noindent It is clear that $h^0(\mathcal{O}_{\overline{X'}}(2\sum_{i=1}^{3}E_i-E_{11}-E_{12}))=0$. Again, by Serre Duality, we have that $h^2(\mathcal{O}_{\overline{X'}}(2\sum_{i=1}^{3}E_i-E_{11}-E_{12}))=0$.
Using the Riemann-Roch Theorem, we obtain that $-h^1(2\sum_{i=1}^{3}E_i-E_{11}-E_{12}))=\frac{1}{2}(2\sum_{i=1}^{3}E_i-E_{11}-E_{12})\cdot(3l+\sum_{i=1}^{3}E_i-\sum_{i=4}^{10}E_i-2E_{11}-2E_{12})+1=-4$.

\noindent Since $\overline{J}\cdot (6l-2\sum_{i=4}^{10}E_i-E_{11}-E_{12})=8$, then $h^0(\mathcal{O}_{\overline{J}}(6l-2\sum_{i=4}^{10}E_i-E_{11}-E_{12}))=9$. Moreover $h^0(\mathcal{O}_{\overline{X'}}(6l-2\sum_{i=4}^{10}E_i-E_{11}-E_{12}))\geq\binom{8}{2}-7\cdot 3-2=5$. To show that equality holds, it is sufficient to prove that $h^1(\mathcal{O}_{\overline{X'}}(6l-2\sum_{i=4}^{10}E_i-E_{11}-E_{12}))=0$.

\noindent We observe that curves of the type $(3l-\sum_{i=4}^{12}E_i)+F$, with $F\in|3l-\sum_{i=4}^{10}E_i|$ and $3l-\sum_{i=4}^{12}E_i$ fixed part, are contained in $|6l-2\sum_{i=4}^{10}E_i-E_{11}-E_{12}|$.

\noindent We have that $\dim|6l-2\sum_{i=4}^{10}E_i-E_{11}-E_{12}|\geq5$ while $\dim |3l-\sum_{i=4}^{10}E_i|=\binom{5}{2}-7=\frac{5\cdot 4}{2}-7=3$, so the reducible curves of the type $(3l-\sum_{i=4}^{12}E_i)+F$ do not fill up all the linear system of the curves of degree $6$. As consequence of Bertini's Theorem (see \cite{Ak}, pag. $1$), the generic curve $D$ of $|6l-2\sum_{i=4}^{10}E_i-E_{11}-E_{12}|$ is irreducible (indeed the curves of the linear system $|6l-2\sum_{i=4}^{10}E_i-E_{11}-E_{12}|$ with fixed part $3l-\sum_{i=4}^{12}E_i$ define a sublinear system and moreover the sublinear system is not composed by a pencil, even more so the linear system $|6l-2\sum_{i=4}^{10}E_i-E_{11}-E_{12}|$).

\noindent Let us consider the exact sequence
\begin{equation}\label{flu 55}
0\rightarrow \mathcal{O}_{\overline{X'}}\rightarrow \mathcal{O}_{\overline{X'}}(D)\rightarrow \mathcal{O}_{D}(D)\rightarrow 0. \end{equation}

\noindent Since $D^2=36-28-2=6$ and $p_a(D)=\frac{5\cdot 4}{2}-7=3$, then $h^1(\mathcal{O}_{D}(D))=0$ (see \cite{H}, Example $IV.1.3.4$). Because $h^1(\mathcal{O}_{\overline{X'}})=0$ by definition, then \linebreak$h^1(\mathcal{O}_{\overline{X'}}(6l-2\sum_{i=4}^{10}E_i-E_{11}-E_{12}))=0$ from the exact sequence (\ref{flu 55}). Consequently $h^0(\mathcal{O}_{\overline{X'}}(6l-2\sum_{i=4}^{10}E_i-E_{11}-E_{12}))=5$ from the exact sequence (\ref{flu 3}).

\noindent Since $\overline{J}$ is rational, then $h^0(\mathcal{O}_{\overline{J}}(12l-2\sum_{i=1}^{3}E_i-4\sum_{i=4}^{10}E_i-E_{11}-E_{12}))=5$, so, from the exact sequence (\ref{flu 2}), we have that $h^0(\mathcal{O}_{\overline{X'}}(12l-2\sum_{i=1}^{3}E_i-4\sum_{i=4}^{10}E_i-E_{11}-E_{12}))=10$ and $h^1(\mathcal{O}_{\overline{X'}}(12l-2\sum_{i=1}^{3}E_i-4\sum_{i=4}^{10}E_i-E_{11}-E_{12}))=0.$

\noindent Finally, from the exact sequence (\ref{flu 1}), we obtain that $h^0(\mathcal{O}_{\overline{X'}}(18l-4\sum_{i=1}^{3}E_i-6\sum_{i=4}^{10}E_i-E_{11}-E_{12}))=11$. The claim is proved.
\end{proof}
\par\bigskip\noindent We have found a new example of rational surface $X\subset\mathbb{P}^{12}$ of degree $\deg(X)=\frac{C'^2}{\deg \phi_{L'}}=24$ with Prym-canonical hyperplane sections
 and only one singularity, a quartic rational singularity.
 \end{exa}


\begin{thebibliography}{}
 \bibitem{A} \textsc{Accola R.D.M.}, \emph{On the Castelnuovo-Severi inequality for Riemann surfaces}, Kodai Math. J. 29 , No. 2 (2006), 299-317.
 \bibitem{Ak} \textsc{Akizuki Y}, \emph{Theorems of Bertini on Linear systems}, Journal of the Mathematical Society of Japan, Vol. 3, No. 1 (1951).
 \bibitem{CDGK} \textsc{Ciliberto C., Dedieu T., Galati C., Knutsen A.L.}, \emph{On the locus of Prym curves where the Prym-canonical map is not an embedding}, Ark. Mat., No. 58 (2020), 71-85.
 \bibitem{CE} \textsc{Castelnuovo G., Enriques F.}, \emph{Die algebraische Fl\"{a}chen vom Gesichtspunkte der birationalen transformationen aus}, Enz. d. Math. Wiss. III C 6 b, Band III 2, Heft 6 (1915), 674-768.
\bibitem{Cil-VdV} \textsc{Ciliberto C., Van der Geer G.}, \emph{On the jacobian of a hyperplane section of a surface}, Lecture Notes in Math., Proceedings of the Trento Conference 1990, Vol. 1515 (1992), 33-40.
\bibitem{E} \textsc{Enriques F.}, \emph{Sulla classificazione delle superficie algebriche e particolarmente sulle superficie di genere Lineare $p^{(1)}=1$}, Rend. Acc. Lincei, s. $5^a$, 23 (1914), 206-214.
\bibitem{Epe1} \textsc{Epema D.}, \emph{PhD thesis entitled ``Surfaces with canonical hyperplane sections''}, Mathematisch Centrum, Amsterdam (1983).
 \bibitem{FP} \textsc{Fuentes L., Pedreira M.}, \emph{The projective theory of ruled surfaces}, Note Mat. 24, No. 1 (2005), 25-63.
\bibitem{GH} \textsc{Griffiths P., Harris J.}, \emph{Principles of algebraic geometry}, Wiley and Sons (1994).
\bibitem{GL} \textsc{Green M., Lazarsfeld R.}, \emph{On the projective normality of complete linear series on an algebraic curve}, Inventiones mathematicae, No. 83 (1986), 73-90.
 \bibitem{H} \textsc{Hartshorne R.}, \emph{Algebraic Geometry}, Springer Verlag (2010).
 \bibitem{K} \textsc{Kawamata Y.}, \emph{A generalization of Kodaira-Ramanujam's vanishing Theorem},  Mathematische Annalen 261 (1982), 43-46.
 \bibitem{M} \textsc{Manetti M.}, \emph{Degenerations of Algebraic Surfaces and applications to Moduli problems}, Tesi di Perfezionamento alla Scuola Normale Superiore di Pisa (1996).
 \bibitem{MUM} \textsc{Mumford D.}, \emph{The topology of normal singularities of an algebraic surface and a criterion for simplicity}, Publ. Math. de l'I.H.E.S., Vol. 9 (1961).
 \bibitem{R} \textsc{Reider I.}, \emph{Vector Bundles of Rank $2$ and Linear Systems on Algebraic Surfaces}, Annals of Mathematics, Vol. 127, No. 2 (1988), 309-316.
 \bibitem{Tom}\textsc{Tomaru T.}, \emph{On Gorenstein surface singularities with fundamental genus $p_f\geq2$ which satisfy some minimality conditions}, Pacific J. Math. Vol. 170, No. 1 (1995), 271-295.
 \bibitem{V} \textsc{Viehweg E.}, \emph{Vanishing Theorems}, Journal f\"{u}r die reine und angewandte Mathematik, Vol. 335 (1982), 1-8.
 \bibitem{Z} \textsc{Zariski O.}, \emph{Complete Linear Systems on Normal Varieties and a Generalization of a Lemma of Enriques-Severi}, Annals of Mathematics, Second Series, Vol. 55, No. 3 (1952), 552-592.


\end{thebibliography}
\end{document}